\newtheorem{theorem}{Theorem}[section]
\newtheorem{lemma}[theorem]{Lemma}
\newtheorem{proposition}[theorem]{Proposition}
\newtheorem{definition}[theorem]{Definition}
\newtheorem{example}[theorem]{Example}
\newtheorem{corollary}[theorem]{Corollary}
\newtheorem{remark}[theorem]{Remark}
\newtheorem{conjecture}[theorem]{Conjecture}
\numberwithin{equation}{section}
\begin{document}
\title{On $\mathrm{H}-$trivial line bundles on toric DM stacks of dimension two}

\begin{abstract}
We study line bundles on toric DM stacks $\mathbb{P}_{\mathbf{\Sigma}}$ of dimension two. We give a combinatorial criterion of when infinitely many line bundles on $\mathbb{P}_{\mathbf{\Sigma}}$ have trivial cohomology. We further discuss the structure of the set of such line bundles.
\end{abstract}

\author{Chengxi Wang}
\address{Department of Mathematics\\
Rutgers University\\
Piscataway, NJ 08854} \email{cw674@math.rutgers.edu}

\maketitle

\tableofcontents
\section{Introduction}\label{section1}
Derived categories of coherent sheaves on toric varieties and DM stacks provide examples of combinatorially defined triangulated categories. A lot of work has been done over the years aimed at finding exceptional objects and collections in these categories in \cite{Ka}. In particular, line bundles provide examples of exceptional objects. The description of line bundles on the DM stacks is analogous to the description of the Picard group that was given in \cite{D,F}.  In 1997 Alastair King conjectured in \cite{Ki} the following.
\begin{conjecture}\label{nFs}
Every smooth toric variety possesses a full strong exceptional collection of line bundles.
\end{conjecture}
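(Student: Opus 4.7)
The plan is to proceed by induction on dimension together with a case analysis on the combinatorial structure of the fan $\Sigma$. The base cases are well understood: for $\mathbb{P}^n$ the classical Beilinson collection $\{\mathcal{O},\mathcal{O}(1),\dots,\mathcal{O}(n)\}$ works, for Hirzebruch surfaces one has the Kawamata/Altmann--Hille constructions, and for products $X\times Y$ the tensor collection $\{L_i\boxtimes M_j\}$ shows the statement is preserved under taking products. This reduces the problem to understanding smooth toric varieties with irreducible fan combinatorics.

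For the general construction, I would exploit the primitive generators $v_1,\dots,v_r$ of the rays of $\Sigma$ and the Cox-homogeneous description of line bundles. A natural candidate is the Bondal--Thomsen collection obtained from Frobenius pushforwards: line bundles of the form $\mathcal{O}(\sum a_i D_i)$ with $0\le a_i < p$ for large $p$. One hopes that a well-chosen subfamily indexed by combinatorial data of the maximal cones (for example, a compatible system of "labels" at each torus-fixed point) gives a collection of the right cardinality, namely $|\Sigma(n)|$ in the complete case, matching the rank of $K_0(X_\Sigma)$.

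Strong exceptionality, i.e.\ $\mathrm{Ext}^i(L_j,L_k)=H^i(X,L_k\otimes L_j^{-1})=0$ for $i>0$ and the appropriate ordering, is in principle checkable by Demazure/Klyachko: the cohomology decomposes as $\bigoplus_{m\in M} H^i(X,L)_m$, and each summand is computable from the support of $m$ in the fan. Thus this step reduces to a finite combinatorial verification on $\Sigma$, which, while laborious, is algorithmic.

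The main obstacle will be fullness. One route is Bondal's tilting criterion: if $T=\bigoplus L_j$ is a tilting bundle with $\mathrm{End}(T)$ of finite global dimension, then $D^b(X)\simeq D^b(\mathrm{End}(T))$ and the collection generates. Proving that the candidate $T$ is actually a generator appears to require either an explicit resolution of the diagonal in $X\times X$ by external tensor products of the $L_j$, or a Koszul-type argument using the Euler sequence on the Cox ring. This is the step I expect to be the hardest, and it is precisely the step whose global, non-local nature couples nontrivially to the combinatorics of $\Sigma$; verifying it uniformly across all smooth toric varieties, rather than on a case-by-case basis, is where a conceptual new idea would be required.
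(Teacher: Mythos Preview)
The statement you are trying to prove is a \emph{conjecture}, and moreover one that the paper itself reports to be \emph{false}: immediately after stating it, the paper notes that it ``was shown to be false in \cite{HP1}'' (Hille--Perling), with further families of counterexamples in \cite{M} and \cite{E}. There is therefore no proof in the paper to compare against, and no correct proof can exist.

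Consequently your proposal must break somewhere, and you have in fact already located the break yourself. Your final paragraph identifies fullness --- producing a resolution of the diagonal or otherwise showing that the candidate $T$ generates $D^b(X)$ --- as the step ``where a conceptual new idea would be required.'' That idea does not exist in general: the Hille--Perling counterexample is a smooth projective toric surface (an iterated blow-up of a Hirzebruch surface) for which \emph{no} full strong exceptional collection of line bundles exists, so for that $\Sigma$ any candidate collection of line bundles of the correct cardinality will fail either strong exceptionality or fullness. The earlier portions of your outline (base cases, products, the Demazure/Klyachko vanishing check being algorithmic for a fixed $\Sigma$) are fine as far as they go, but the inductive/reductive scheme cannot close because the class of smooth complete toric varieties is not generated from the base cases by operations that preserve the property, and the ``uniform'' fullness argument you hoped for is provably unavailable.
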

The conjecture was shown to be false in \cite{HP1}, but multiple negative and positive results in this general direction have been obtained in \cite{HP2,M,E,K,I}.
In \cite{E}
Efimov also disproves weaker version of King's conjecture proposed by Costa and Mir\'{o}-Roig in \cite{CM},
and Craw through constructing infinitely many examples of toric Fano varities with Picard number three, which do not have full exceptional collections of line bundles. Conjecture \ref{nFs} turned out to be true for smooth toric Fano DM stacks of Picard number at most two and of any Picard number in dimension two \cite{K}. Further, a full strong exceptional collection of line bundles is constructed on any smooth toric nef-Fano Del Pezzo stack in
\cite{I}.

Since the requirement $\mathrm{Ext}^i(\mathcal{L}_1, \mathcal{L}_2)= 0$ in the definition of exceptional collections of line bundles translates into $\mathrm{H}^i(\mathcal{L}_2\otimes\mathcal{L}_1^{-1})=0$,
it is natural to study line bundles with trivial cohomology spaces. In our paper, we are interested in such $\mathrm{H}-$trivial line bundles.

It is meaningful to ask when there are infinitely many $\mathrm{H}-$trivial line bundles. One way to get infinitely many $\mathrm{H}-$trivial line bundles is to have a fibration $\pi$ from $\mathbb{P}_{\mathbf{\Sigma}}$ to a certain base and a line bundle $\mathcal{L}$ on $\mathbb{P}_{\mathbf{\Sigma}}$ such that the higher direct images $\mathbf{R}^i \pi_*(\mathcal{L})=0$ for all $i\geq 0$. The reason is that by the Leray spectral sequence, we get $H^i(\mathbb{P}_{\mathbf{\Sigma}},\mathcal{L})=0$ for $i\geq 0$. Importantly, for any line bundle $\mathcal{F}$ from the base, the higher direct images $\mathbf{R}^i \pi_*(\mathcal{L}\otimes \pi^* \mathcal{F})=0$ for all $i\geq 0$, which implies $\mathcal{L}\otimes \pi^* \mathcal{F}$ is $\mathrm{H}-$trivial. Thus we may obtain infinitely many $\mathrm{H}-$trivial line bundles on $\mathbb{P}_{\mathbf{\Sigma}}$.

In this paper, we find a combinatorial criterion for when there exist infinitely many $\mathrm{H}-$trivial line bundles on $\mathbb{P}_{\mathbf{\Sigma}}$ for smooth toric varieties and DM stacks in dimension two.

\smallskip

{\bf Theorem \ref{iff}.}
Let $\mathbb{P}_{\mathbf{\Sigma}}$ be a proper smooth dimension two toric DM stacks associated to a complete stacky fan $\mathbf{\Sigma}=(\Sigma, \{v_i\}_{i=1}^{n})$. Then
there are infinitely many $\mathrm{H}-$trivial line bundles on $\mathbb{P}_{\mathbf{\Sigma}}$ if and only if there exists $\{i, j\}\subset\{1,2,\cdots,n\}$ such that $v_i$ and $v_j$ are collinear.

\smallskip

Moreover, when there are infinitely many $\mathrm{H}-$trivial line bundles, we display the tubes $+$ ball description of $\mathrm{H}-$trivial line bundles in Theorem \ref{BT1}. That is to say that the set of $\mathrm{H}-$trivial line bundles can be depicted in the form "finite set $+$ finite set of lines ", which is revealed by Proposition \ref{line} and Theorem \ref{BT1}. Further, we generalize our results to broader classes $\Lambda_m$ of line bundles with $\mathrm{dim}\, \mathrm{H}^0+ \mathrm{dim}\, \mathrm{H}^1+ \mathrm{dim}\, \mathrm{H}^2<m$ for some positive integer $m$.

The paper is organized as follows.
In Section \ref{stack}, we review smooth toric DM stacks and their Picard groups. Also, we introduce the cohomology of line bundles on the stacks, the definition of forbidden cones and forbidden sets, and state Theorem \ref{iff}. Section \ref{H} focuses on the proof of Theorem \ref{iff}. We first prove the 'if' direction in Proposition \ref{->inf}. As for the 'only if' direction, the key step is to show that any non-zero element $\mathcal{L}\in \mathrm{Pic}_{\mathbb{R}}(\mathbb{P}_{\mathbf{\Sigma}})$ is contained in the interior of some forbidden cone shifted to the origin (see Lemma \ref{interior}). Section \ref{description} further reveals the tubes $+$ ball structure of the set of $\mathrm{H}-$trivial line bundles. Section \ref{gene} contains generalization of our results to $\Lambda_m$. Section \ref{comments} proposes a conjecture in dimension $3$ case.
In Appendix \ref{appendix}, we collect several facts about cones in a finitely generated abelian group which are used in the proof of the results in former sections.

\smallskip

\noindent{\it Acknowledgements.} The author thanks Lev Borisov for multiple useful comments.

\section{Line bundles on toric DM stacks and their cohomology}\label{stack}
In this section, we introduce toric DM stacks $\mathbb{P}_{\mathbf{\Sigma}}$ and their Picard groups $\mathrm{Pic} (\mathbb{P}_{\mathbf{\Sigma}})$, and describe the cohomology of line bundles on $\mathbb{P}_{\mathbf{\Sigma}}$.
We formulate our main result which is a criterion for dimension two proper toric DM stacks to have infinitely  many line bundles with trivial cohomology.

In order to refrain from technicalities of the derived Gale duality of \cite{BCS}, we consider a lattice
$N$ which is a free abelian group of finite rank. Let $\Sigma$ be a complete simplicial fan in $N$. We choose a lattice point $v$ in each of the one-dimensional cones of $\Sigma$. If $\Sigma$ has $n$ one-dimensional cones, we get a complete stacky fan $\mathbf{\Sigma}=(\Sigma, \{v_i\}_{i=1}^{n})$, see \cite{BCS}.

The toric DM stack $\mathbb{P}_{\mathbf{\Sigma}}$ associated to this stacky fan $\mathbf{\Sigma}$ is constructed as follows, see \cite{BCS,C}. We have a natural map with finite cokernel
$$\sigma: \mathbb{Z}^n \rightarrow N$$ given by $(\mu_1,\ldots,\mu_n)\mapsto \sum_{i=1}^{n}\mu_i v_i$. Taking the dual injective map $$\sigma^*: N^* \rightarrow \mathbb{Z}^n ,$$ we get the cokernel of $\sigma^*$ which we denote by $\mathrm{Gale}(N)$. Define the algebraic Group $G$ by $$G:=\mathrm{Hom}(\mathrm{Gale}(N),\mathbb{C}^*).$$ Now we have an injection induced by $\sigma^*$ $$G\subseteq (\mathbb{C}^*)^n$$ and $G=\{(\zeta_1,\ldots,\zeta_n)\in(\mathbb{C}^*)^n| \prod_{i=1}^{n}\zeta_i^{w\cdot v_i}=1 \text{ for all }w\in N^*\}$, where $w\cdot v_i$ is natural pairing.
Define the open set $U$ in $(\mathbb{C}^*)^n$ as follows. A point $(x_1,\ldots,x_n)\in(\mathbb{C}^*)^n$ lies in $U$ if and only if there exists
a cone in $\Sigma$ which contains all $\{v_i|x_i=0\}$. We have a natural action of $G$ on $U$ via inclusion $G\subseteq (\mathbb{C}^*)^n$. Group $G$ acts with finite isotropy subgroups and we define by $\mathbb{P}_{\mathbf{\Sigma}}$ the DM stack $[U/G]$, see \cite{BCS}.

By \cite{V}, we know the category of coherent sheaves on $\mathbb{P}_{\mathbf{\Sigma}}$ is equivalent to the category of $G-$equivariant sheaves on $U$. Specifically, the line bundles on $\mathbb{P}_{\mathbf{\Sigma}}$ are described explicitly as follows.

\begin{definition}\label{lb}
For any element $(r_1,\ldots,r_n)\in \mathbb{Z}^n$, we have the trivial line bundle $\mathbb{C}\times U\rightarrow U$ with the $G-$linearization $G \times \mathbb{C}\times U\rightarrow \mathbb{C}\times U$ denoted by $$((\zeta_1,\ldots,\zeta_n), t, (x_1,\ldots,x_n))\mapsto(t\prod_{i=1}^{n}\zeta_i^{r_i},(\zeta_1x_1,\ldots,\zeta_nx_n)).$$By \cite{V}, this gives a line bundle on $\mathbb{P}_{\mathbf{\Sigma}}$ and we denote the corresponding invertible sheaf in Picard group of $\mathbb{P}_{\mathbf{\Sigma}}$ by $\mathcal{O}(\Sigma_{i=1}^{n}r_iE_i)$.
\end{definition}

Then we have the following proposition to describe the Picard group of $\mathbb{P}_{\mathbf{\Sigma}}$ which we denote by $\mathrm{Pic}(\mathbb{P}_{\mathbf{\Sigma}})$.
\begin{proposition}\label{pic}
We obtain all line bundles on $\mathbb{P}_{\mathbf{\Sigma}}$ by the construction of Definition \ref{lb}. The Picard group of $\mathbb{P}_{\mathbf{\Sigma}}$ is isomorphic to the quotient of $\mathbb{Z}^n$ with basis $\{E_i\}_{i=1}^{n}$ by the subgroup of elements of the form $\sum_{i=1}^{n}(w_i\cdot v_i)E_i$ for all $w\in N^*$.
\end{proposition}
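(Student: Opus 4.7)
The plan is to combine three ingredients: the equivalence from \cite{V} between line bundles on $\mathbb{P}_{\mathbf{\Sigma}}=[U/G]$ and $G$-equivariant line bundles on $U$; the vanishing $\mathrm{Pic}(U)=0$; and Cartier duality between the algebraic group $G$ and the finitely generated abelian group $\mathrm{Gale}(N)$.

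First I would establish $\mathrm{Pic}(U)=0$. By construction $U$ is the complement in $\mathbb{C}^n$ of the union of coordinate subspaces $V_I=\{x_i=0:i\in I\}$ over those index sets $I$ for which $\{v_i:i\in I\}$ is not contained in any cone of $\Sigma$. Since each individual ray $v_i$ spans a one-dimensional cone of $\Sigma$, no singleton $I$ occurs, so every such $V_I$ has codimension at least two in $\mathbb{C}^n$. The standard purity statement that removing a closed subset of codimension at least two from a smooth variety does not change the Picard group then yields $\mathrm{Pic}(U)=\mathrm{Pic}(\mathbb{C}^n)=0$.

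With this vanishing, every $G$-equivariant line bundle on $U$ has trivial underlying line bundle and is thus obtained by equipping $\mathbb{C}\times U\to U$ with a $G$-linearization, i.e.\ a character of $G$. Two such characters give isomorphic equivariant bundles only if they differ by a $G$-invariant unit in $\mathcal{O}(U)$; since the complement of $U$ has codimension at least two, Hartogs' theorem gives $\mathcal{O}(U)=\mathbb{C}[x_1,\ldots,x_n]$ and hence $\mathcal{O}(U)^*=\mathbb{C}^*$, on which $G$ acts trivially, so distinct characters give non-isomorphic bundles. Therefore $\mathrm{Pic}(\mathbb{P}_{\mathbf{\Sigma}})\cong\mathrm{Hom}(G,\mathbb{C}^*)$, and by Cartier duality for the diagonalisable group $G=\mathrm{Hom}(\mathrm{Gale}(N),\mathbb{C}^*)$ this equals $\mathrm{Gale}(N)=\mathrm{coker}(\sigma^*)$.

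Finally, to match this identification with Definition \ref{lb}, I would note that the tuple $(r_1,\ldots,r_n)\in\mathbb{Z}^n$ corresponds to the character $(\zeta_1,\ldots,\zeta_n)\mapsto\prod_i\zeta_i^{r_i}$ of $(\mathbb{C}^*)^n$, whose restriction to $G$ is trivial precisely when $(r_1,\ldots,r_n)$ lies in the image of $\sigma^*$, that is, when it has the form $(w\cdot v_1,\ldots,w\cdot v_n)$ for some $w\in N^*$. This recovers the quotient description stated in the proposition. I expect the main technical step to be the purity input used to kill $\mathrm{Pic}(U)$; once that is in place, the rest of the argument is an unwinding of definitions together with the Cartier-duality identification of characters of $G$.
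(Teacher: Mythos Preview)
Your argument is correct and is essentially the standard proof one finds in the literature; the paper itself does not give an argument at all but simply defers to \cite{K} with ``See \cite{K}.'' So you have supplied more than the paper does. Two minor remarks: (i) in your last paragraph you explicitly identify the kernel of the restriction map $\mathbb{Z}^n\to\mathrm{Hom}(G,\mathbb{C}^*)$, but to conclude that Definition \ref{lb} produces \emph{all} line bundles you also need surjectivity, which follows immediately from the fact that $\mathbb{Z}^n\to\mathrm{Gale}(N)$ is surjective by the definition of $\mathrm{Gale}(N)$ as a cokernel; (ii) note that the paper's statement that $U$ is open in $(\mathbb{C}^*)^n$ is a typo for $\mathbb{C}^n$, as you correctly use.
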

\begin{proof}
See \cite{K}.
\end{proof}

Now we remind the reader how to calculate the cohomology of a line bundle $\mathcal{L}$ on $\mathbb{P}_{\mathbf{\Sigma}}$. For each $\mathbf{r}=(r_i)_{i=1}^{n}\in \mathbb{Z}^n$, we define $Supp(\mathbf{r})$ to be the simplicial complex on $n$ vertices $\{1,\ldots,n\}$ as follows
\begin{equation*}
\begin{split}
Supp(\mathbf{r})=&\{J \subseteq \{1,\ldots,n\}| r_i\geq 0 \text{ for all }i\in J\\
&\text{ and there exists a cone of $\Sigma$ containing all }v_i, i\in J \}.
\end{split}
\end{equation*}

The following proposition gives a description of the cohomology of a linear bundle $\mathcal{L}$ on $\mathbb{P}_{\mathbf{\Sigma}}$.
\begin{proposition}\label{Coho}
\cite{K} Let $\mathcal{L} \in \mathrm{Pic}(\mathbb{P}_{\mathbf{\Sigma}})$. Then $$\mathrm{H}^j(\mathbb{P}_{\mathbf{\Sigma}},\mathcal{L})=\bigoplus \mathrm{H}^{red}_{rk N-j-1}(Supp(\mathbf{r})),$$ where the sum is over all $\mathbf{r}=(r_i)_{i=1}^{n}\in \mathbb{Z}^n$ such that $\mathcal{O}(\sum_{i=1}^{n}r_iE_i)\cong \mathcal{L}$.
\end{proposition}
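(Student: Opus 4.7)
My plan is to use the presentation $\mathbb{P}_{\mathbf{\Sigma}} = [U/G]$ together with the equivariant-descent equivalence of \cite{V} to reduce the computation to a multigraded cohomology calculation on $U$, and then to identify the graded pieces with reduced simplicial homology via \v{C}ech cohomology on a suitable affine cover.

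First, by \cite{V}, coherent sheaves and their cohomology on $\mathbb{P}_{\mathbf{\Sigma}}$ correspond to $G$-equivariant coherent sheaves and $G$-invariants of cohomology on $U$. The line bundle $\mathcal{L}$ pulls back to the trivial bundle $\mathcal{O}_U$ equipped with the $G$-linearization prescribed by Definition \ref{lb}, and since the $(\mathbb{C}^*)^n$-action on $U$ refines the $G$-action, decomposing $H^j(U,\mathcal{O}_U)$ into its $\mathbb{Z}^n$-weight pieces yields
$$H^j(\mathbb{P}_{\mathbf{\Sigma}}, \mathcal{L}) \;=\; \bigoplus_{\mathbf{r}\,:\, \mathcal{O}(\sum_{i} r_i E_i) \cong \mathcal{L}} H^j(U,\mathcal{O}_U)_{\mathbf{r}}.$$
Thus it suffices to exhibit a canonical isomorphism $H^j(U,\mathcal{O}_U)_{\mathbf{r}} \cong \widetilde{H}_{\mathrm{rk}\,N - j - 1}(Supp(\mathbf{r}))$ for each $\mathbf{r} \in \mathbb{Z}^n$.

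Next, I would compute $H^j(U,\mathcal{O}_U)_{\mathbf{r}}$ using \v{C}ech cohomology with respect to the affine open cover $\{U_\sigma\}$ indexed by the maximal cones of $\Sigma$, where $U_\sigma := \{x \in U : x_i \neq 0 \text{ for all } v_i \notin \sigma\}$. Each $U_\sigma$ is a principal localization of $\mathbb{C}[x_1,\ldots,x_n]$, hence affine, and the definition of $U$ ensures that the $U_\sigma$ cover $U$, so \v{C}ech cohomology agrees with sheaf cohomology. The $\mathbf{r}$-graded part of $\mathcal{O}(U_\sigma)$ is one-dimensional exactly when $r_i \geq 0$ for every $i$ with $v_i \in \sigma$, and an analogous condition governs the intersections $U_{\sigma_0}\cap\cdots\cap U_{\sigma_p}$, placing us in a purely combinatorial setting.

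The core step is to recognize the $\mathbf{r}$-graded \v{C}ech complex as computing the reduced homology of $Supp(\mathbf{r})$, up to a degree shift. The \v{C}ech nerve is the full simplex on the set of maximal cones of $\Sigma$, and the subcomplex on which $x^{\mathbf{r}}$ is regular is determined by which faces of $\Sigma$ contain all $v_i$ with $r_i \geq 0$; an Alexander-duality-type identification inside this ambient simplex converts the resulting reduced cochain complex into the reduced chain complex of $Supp(\mathbf{r})$ in homological degree $\mathrm{rk}\,N - j - 1$. Summing over all $\mathbf{r}$ in a fixed Picard class gives the stated formula. The main obstacle will be exactly this last identification and the degree shift: the target $\widetilde{H}_{\mathrm{rk}\,N - j - 1}$ is forced by Serre-duality-type boundary behavior (for example $j = \mathrm{rk}\,N$ must produce only contributions from $Supp(\mathbf{r}) = \emptyset$ via $\widetilde{H}_{-1}$), so one must track orientations and dualities carefully rather than appealing to a formal nerve argument.
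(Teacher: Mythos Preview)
The paper does not give its own proof of this proposition: its entire argument is the sentence ``See \cite{K}.'' So there is nothing substantive to compare your plan against in this paper; you are effectively reconstructing the argument from the cited source (Borisov--Hua).

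Your outline is the standard one and matches the approach in \cite{K}: descend from $[U/G]$ to $G$-equivariant cohomology on $U$, split by $\mathbb{Z}^n$-weights, and compute each weight piece by \v{C}ech cohomology on the affine cover $\{U_\sigma\}$ indexed by maximal cones. The description of when the $\mathbf{r}$-graded piece of $\mathcal{O}(U_{\sigma_0}\cap\cdots\cap U_{\sigma_p})$ is nonzero is correct, and you correctly identify the remaining work as a combinatorial duality inside the nerve simplex that converts the relative cochain complex into the reduced chain complex of $Supp(\mathbf{r})$ with the shift $j\mapsto \mathrm{rk}\,N-j-1$. That step is exactly where the cited reference does the work (and where the degree bookkeeping is delicate), so flagging it as the main obstacle is accurate rather than a gap. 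For the purposes of this paper, simply citing \cite{K} as the paper does is sufficient; if you want a self-contained account, your sketch is on the right track and only needs the Alexander-duality identification made explicit.
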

\begin{proof}
See \cite{K}.
\end{proof}
\begin{remark}\label{extreme}
We have $\mathrm{H}^0(\mathcal{L})\neq0$ if and only if there exists $\mathbf{r}\in \mathbb{Z}_{\geq 0}^n$ such that $\mathcal{O}(\sum_{i=1}^{n}r_iE_i)\cong \mathcal{L}$. Another extreme case is that $\mathrm{H}^{rk(N)}(\mathcal{L})$ only appears when the simplicial complex $Supp(\mathbf{r})=\{\emptyset\}$, i.e. when $\mathcal{O}(\sum_{i=1}^{n}r_iE_i)\cong \mathcal{L}$ with all $r_i\leq -1$.
\end{remark}

\begin{remark}\label{coho2}
Let $\mathcal{L}\cong \mathcal{O}(\sum_{i=1}^{n}a_iE_i) $ be a line bundle in $\mathrm{Pic} (\mathbb{P}_{\mathbf{\Sigma}})$. Assume there is another expression $\mathcal{L} \cong \mathcal{O}(\sum_{i=1}^{n}r_iE_i)$. Then by Proposition \ref{pic}, there exists an element $f\in N^*$ such that $r_i=a_i+f(v_i)$ for $i=1,\ldots,n$, where $f(v_i)=(f.v_i)$. Thus the cohomology of $\mathcal{L}$ can also be written as following:
$$\mathrm{H}^j(\mathbb{P}_{\mathbf{\Sigma}},\mathcal{L})=\bigoplus_{f\in N^*}\mathrm{H}^{red}_{rk N-j-1}(Supp(\mathbf{r}_f)),$$where $\mathbf{r}_f=(a_i+f(v_i))_{i=1}^{n}$.
\end{remark}

In this paper, our primary objects of interest are $\mathrm{H}-$trivial line bundles which we define below.

\begin{definition}
Let $\mathcal{L} $ be a line bundle in $\mathrm{Pic} (\mathbb{P}_{\mathbf{\Sigma}})$. We say that $\mathcal{L}$ is $\mathrm{H}-$trivial iff $\mathrm{H}^j(\mathbb{P}_{\mathbf{\Sigma}},\mathcal{L})=0$ for all $j\geq 0$.
\end{definition}

A combinatorial criterion for $\mathrm{H}-$triviality is given in terms of forbidden sets introduced below, see \cite{K}.

\begin{definition}
For every subset $I \subseteq \{1,\ldots,n\}$, we denote $C_I$ to be the simiplicial complex $Supp(\mathbf{r})$ where $r_i=-1$ for $i \notin I$ and $r_i=0$ for $i\in I$. Let $\Delta=\{I \subseteq \{1,\ldots,n\}| C_I \text{ has nontrivial reduced homology }\}$. By Remark \ref{extreme}, $\Delta$ contains $\{1,\ldots,n\}$ and $\emptyset$. For each $I\in \Delta$, the forbidden set associated to $I$ is defined by $$FS_{I}:=\{\mathcal{O}(\sum_{i\notin I}(-1-r_i)E_i+\sum_{i\in I}r_iE_i)|r_i\in \mathbb{Z}_{\geq 0} \text{ for all } i\}.$$
\end{definition}

\begin{proposition}\label{Htrivial}
Let $\mathcal{L}$ be a line bundle on $\mathbb{P}_{\mathbf{\Sigma}}$. Then $\mathcal{L}$ is $\mathrm{H}-$trivial if and only if $\mathcal{L}$ does not lie in $FS_I$ for any $I\in \Delta$.
\end{proposition}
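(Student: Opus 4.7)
The plan is to unpack the definitions and reduce the claim to a direct translation between representatives $\mathbf{r}$ of $\mathcal{L}$ in $\mathbb{Z}^n$ and the data $(I, (s_i))$ that parametrizes the forbidden sets $FS_I$. The starting point is the cohomology formula of Proposition \ref{Coho} (in the form of Remark \ref{coho2}), which writes
\[
\mathrm{H}^j(\mathbb{P}_{\mathbf{\Sigma}},\mathcal{L})=\bigoplus_{\mathbf{r}}\mathrm{H}^{red}_{rk N-j-1}(Supp(\mathbf{r})),
\]
the sum ranging over all $\mathbf{r}\in\mathbb{Z}^n$ with $\mathcal{O}(\sum r_i E_i)\cong \mathcal{L}$. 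So $\mathcal{L}$ is $\mathrm{H}$-trivial if and only if $Supp(\mathbf{r})$ has vanishing reduced homology in every degree, for every such representative $\mathbf{r}$.

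The key observation is that $Supp(\mathbf{r})$ depends on $\mathbf{r}$ only through the subset $I(\mathbf{r}):=\{i : r_i\geq 0\}$. Indeed, the defining conditions on $J\in Supp(\mathbf{r})$ are (i) $r_i\geq 0$ for all $i\in J$, which is equivalent to $J\subseteq I(\mathbf{r})$, and (ii) the existence of a cone of $\Sigma$ containing all $v_i$ for $i\in J$, which does not involve $\mathbf{r}$ at all. Comparing with the definition of $C_I$, we get $Supp(\mathbf{r})=C_{I(\mathbf{r})}$. Therefore the cohomology sum becomes
\[
\mathrm{H}^j(\mathbb{P}_{\mathbf{\Sigma}},\mathcal{L})=\bigoplus_{\mathbf{r}}\mathrm{H}^{red}_{rk N-j-1}(C_{I(\mathbf{r})}),
\]
and $\mathcal{L}$ is $\mathrm{H}$-trivial if and only if $I(\mathbf{r})\notin \Delta$ for every representative $\mathbf{r}$ of $\mathcal{L}$.

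It then remains to show that the existence of a representative $\mathbf{r}$ with $I(\mathbf{r})=I$ for a given $I\subseteq\{1,\ldots,n\}$ is exactly the condition $\mathcal{L}\in FS_I$. This is a bookkeeping step: writing $r_i=s_i$ for $i\in I$ with $s_i\geq 0$, and $r_i=-1-s_i$ for $i\notin I$ with $s_i\geq 0$, gives a bijection between such $\mathbf{r}$ and tuples $(s_i)_{i=1}^n\in\mathbb{Z}_{\geq 0}^n$, and under this bijection $\mathcal{O}(\sum_i r_i E_i)=\mathcal{O}\bigl(\sum_{i\notin I}(-1-s_i)E_i+\sum_{i\in I}s_i E_i\bigr)$, which is precisely the general element of $FS_I$. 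Putting this together, $\mathcal{L}$ fails to be $\mathrm{H}$-trivial if and only if $\mathcal{L}\in FS_I$ for some $I\in\Delta$.

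I do not anticipate a genuine obstacle; the only subtlety is noticing that $Supp(\mathbf{r})$ is insensitive to the precise values of $r_i$ and only records the sign pattern, which collapses the direct sum over all representatives into a sum indexed by the subsets $I\subseteq\{1,\ldots,n\}$ that actually occur. Once that is made explicit, both implications of the biconditional follow at once from Proposition \ref{Coho} and the definition of $\Delta$ and $FS_I$.
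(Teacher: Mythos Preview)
Your argument is correct and is exactly the unpacking of the one-line proof in the paper, which simply says ``This follows immediately from Proposition \ref{Coho}.'' You have made explicit the two observations that make it immediate: that $Supp(\mathbf{r})=C_{I(\mathbf{r})}$ depends only on the sign pattern of $\mathbf{r}$, and that the representatives with $I(\mathbf{r})=I$ are in bijection with the elements of $FS_I$.
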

\begin{proof}
This follows immediately from Proposition \ref{Coho}.
\end{proof}

Let $\mathrm{Pic}_{\mathbb{R}}(\mathbb{P}_{\mathbf{\Sigma}})=\mathrm{Pic}(\mathbb{P}_{\mathbf{\Sigma}})\otimes \mathbb{R}$ which can be regarded as a quotient of $\mathbb{R}^n$ with basis elements given by $E_i$. We know $\mathrm{Pic}_{\mathbb{R}}(\mathbb{P}_{\mathbf{\Sigma}})$ is a vector space with dimension equal to the rank of $\mathrm{Pic}(\mathbb{P}_{\mathbf{\Sigma}})$.
\begin{definition}\label{ZI}
For each $I\in \Delta$, we define the forbidden point by $$q_I=-\sum_{i\notin I}E_i \in \mathrm{Pic}_{\mathbb{R}}(\mathbb{P}_{\mathbf{\Sigma}}).$$
Define a cone associated to $I$ with vertex at the origin to be $$Z_I=\sum_{i\in I}\mathbb{R}_{\geq 0}E_i-\sum_{i\notin I}\mathbb{R}_{\geq 0} E_i.$$
Define the forbidden cone $FC_I\subseteq \mathrm{Pic}_{\mathbb{R}}(\mathbb{P}_{\mathbf{\Sigma}}) $ by $$FC_I=q_I+Z_I.$$
\end{definition}
\begin{remark}
By definition, we have $FS_I\subseteq FC_I$ for any $I\in \Delta$.
\end{remark}
In this paper, we are mainly concerned with dimension two, i.e. $N=\mathbb{Z}^2$. We have a complete simplicial fan $\Sigma$ in $\mathbb{Z}^2$ with $n$ one-dimensional cones and $n$ lattice points $\{v_i\}_{i=1}^n$ chosen in each of the one-dimensional cones of $\Sigma$, see Figure \ref{fig:5}. The maximal cones of $\Sigma$ are $\mathbb{R}_{\geq 0}v_1+\mathbb{R}_{\geq 0}v_2, \mathbb{R}_{\geq 0}v_2+\mathbb{R}_{\geq 0}v_3,\ldots,\mathbb{R}_{\geq 0}v_n+\mathbb{R}_{\geq 0}v_1$. In dimension $2$ case, we describe $\Delta=\{\emptyset, \{1,\ldots,n\}\}\cup \{I\subset \{1,\ldots,n\}| C_I\text{ is disconnected}\}$. For example, we have $\{1,3\}\in \Delta$ if $n>3$, $\{n,2,3\}\in \Delta$ if $n>4$, but $\{1,2\}\notin \Delta$, $\{n,1,2\}\notin \Delta$ for all $n>2$.
\begin{figure}[H]
  \includegraphics[width=0.3\textwidth]{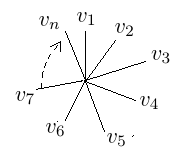}
  \caption{}
  \label{fig:5}
\end{figure}
Our first key result is a criterion for toric DM stacks to have infinitely many $\mathrm{H}-$trivial line bundles whose proof is give in the next section.
This key result is:
\begin{theorem}\label{iff}
Let $\mathbb{P}_{\mathbf{\Sigma}}$ be a proper smooth dimension two toric DM stacks associated to a complete stacky fan $\mathbf{\Sigma}=(\Sigma, \{v_i\}_{i=1}^{n})$. Then
there are infinitely many $\mathrm{H}-$trivial line bundles on $\mathbb{P}_{\mathbf{\Sigma}}$ if and only if there exists $\{i, j\}\subset\{1,2,\cdots,n\}$ such that $v_i$ and $v_j$ are collinear.
\end{theorem}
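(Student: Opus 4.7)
The plan is to handle the two directions separately, following the roadmap in the introduction. The 'if' direction is settled by a fibration argument, while the 'only if' direction reduces to a combinatorial lemma about the cones $Z_I$, $I \in \Delta$, covering $\mathrm{Pic}_{\mathbb{R}}(\mathbb{P}_{\mathbf{\Sigma}})$.

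For the 'if' direction, suppose $v_i$ and $v_j$ are collinear. Since they occupy distinct one-dimensional cones of the complete fan $\Sigma$ in $\mathbb{R}^2$, they must lie on opposite rays through the origin, defining a line $L$. Quotienting $N$ by the rank-one sublattice $N \cap L$ yields a rank-one quotient $\overline{N}$; the image of $\Sigma$ under $N \to \overline{N}$ is the unique complete fan in $\overline{N}_{\mathbb{R}}$, and the images of the $v_k$'s constitute a one-dimensional stacky fan $\overline{\mathbf{\Sigma}}$. This yields a toric morphism $\pi \colon \mathbb{P}_{\mathbf{\Sigma}} \to \mathbb{P}_{\overline{\mathbf{\Sigma}}}$ whose fibers are stacky $\mathbb{P}^1$'s. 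Since $E_i$ restricts to a single fiber-point, $\mathcal{L}_0 := \mathcal{O}(-E_i)$ restricts to $\mathcal{O}(-1)$ on each fiber, giving $\mathbf{R}\pi_* \mathcal{L}_0 = 0$. By the projection formula, $\mathbf{R}\pi_*(\mathcal{L}_0 \otimes \pi^*\mathcal{F}) = 0$ for every $\mathcal{F} \in \mathrm{Pic}(\mathbb{P}_{\overline{\mathbf{\Sigma}}})$, and Leray then forces $\mathrm{H}^j(\mathbb{P}_{\mathbf{\Sigma}}, \mathcal{L}_0 \otimes \pi^*\mathcal{F}) = 0$ for all $j$. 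Since $\mathrm{Pic}(\mathbb{P}_{\overline{\mathbf{\Sigma}}})$ is infinite, this produces infinitely many distinct $\mathrm{H}$-trivial line bundles on $\mathbb{P}_{\mathbf{\Sigma}}$.

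For the 'only if' direction, assume no two of the $v_i$'s are collinear. The core is Lemma \ref{interior}: every nonzero $\mathcal{L} \in \mathrm{Pic}_{\mathbb{R}}(\mathbb{P}_{\mathbf{\Sigma}})$ lies in $\mathrm{int}(Z_I)$ for some $I \in \Delta$. To prove this I fix a real lift $\mathbf{r}_0 \in \mathbb{R}^n$ of $\mathcal{L}$; other real lifts are $\mathbf{r}_0 + \sigma^*(w)$ for $w \in N^*_{\mathbb{R}} \cong \mathbb{R}^2$, and the sign of the $i$-th coordinate flips across the affine line $\ell_i \colon w \cdot v_i = -(r_0)_i$. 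The non-collinearity hypothesis keeps no two of the $\ell_i$'s parallel, while $\mathcal{L} \neq 0$ prevents them from being concurrent. The sign patterns realized in the unbounded regions of this arrangement are of the form $\{i \colon \hat{w} \cdot v_i > 0\}$ for a direction $\hat{w}$ at infinity, and by cyclic order of the $v_i$'s each such pattern is a contiguous arc in $\{1, \ldots, n\}$. I then argue that, exploiting non-concurrency, one can cross an appropriate $\ell_i$ into an adjacent region whose sign pattern either covers all indices, vacates them, or splits an existing arc; the resulting $I$ lies in $\Delta$ and the strict sign conditions place $\mathcal{L}$ in $\mathrm{int}(Z_I)$.

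Granting Lemma \ref{interior}, finiteness of $\mathrm{H}$-trivial line bundles follows by a compactness argument. If infinitely many $\mathrm{H}$-trivial $\mathcal{L}_k$ existed, then modding out the finite torsion of $\mathrm{Pic}$ their images in $\mathrm{Pic}_{\mathbb{R}}$ would form infinitely many distinct points. Normalizing $u_k = \mathcal{L}_k/\|\mathcal{L}_k\|$ on the unit sphere and passing to a subsequence, $u_k \to u \neq 0$; Lemma \ref{interior} places $u$ in $\mathrm{int}(Z_I)$ for some $I \in \Delta$, and for $k$ large $u_k$ lies there too. Combined with $\|\mathcal{L}_k\| \to \infty$, the bounded translation by $q_I$ cannot push $\mathcal{L}_k$ out of $Z_I$, yielding $\mathcal{L}_k \in FC_I$. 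The strict-interior condition together with an integer approximation, supplied by the appendix lemmas on cones in finitely generated abelian groups, upgrades this from $FC_I$ to $FS_I$, contradicting $\mathrm{H}$-triviality. The main obstacle is the proof of Lemma \ref{interior} itself: concretely exhibiting a sign pattern in $\Delta$ from the arrangement $\{\ell_i\}$ while tracking the cyclic structure of the $v_i$'s.
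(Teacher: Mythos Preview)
Your two directions stand in different relations to the paper's own arguments.

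For the ``if'' direction you invoke the fibration heuristic from the introduction, whereas the paper supplies a direct combinatorial proof (Proposition~\ref{->inf}): it writes down the explicit bundle $\mathcal{L}=\mathcal{O}(\sum_{i\in I_+}h(v_i)E_i-E_p)$ for each linear $h$ vanishing on $v_p,v_q$, and checks via a sign-change count (Lemma~\ref{signch}) that every representative $\mathbf{r}_f$ has exactly two sign changes, forcing $Supp(\mathbf{r}_f)$ to have trivial reduced homology. Your approach is more conceptual but has a genuine gap in the stacky setting: the target stacky fan $\overline{\mathbf{\Sigma}}$ is not well defined when several $v_k$ project to the same ray of $\overline{N}$ (and $v_i,v_j$ themselves project to zero), and the claim that $\mathcal{O}(-E_i)$ restricts to an $\mathrm{H}$-trivial bundle on each fiber is unjustified once the fibers carry nontrivial stabilizers. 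The paper's argument sidesteps all of this by never constructing $\pi$.

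For the ``only if'' direction your plan coincides with the paper's (Lemma~\ref{interior} followed by the compactness step of Lemma~\ref{d} and Proposition~\ref{inf->}), and your hyperplane-arrangement picture is a reformulation of the same data: sitting at $\ell_j\cap\ell_{j+1}$ is exactly the choice $g(v_j)=g(v_{j+1})=0$ of Lemma~\ref{g}, and ``crossing an $\ell_i$'' is the paper's perturbation by a small linear $f'$. However, the step you yourself flag as the obstacle---finding a region whose sign pattern lies in $\Delta$---is the entire content of Lemma~\ref{interior}, and your sketch does not establish it. The assertion that crossing a single line from an unbounded region splits the arc can fail: if the flipped index is an endpoint of the arc or adjacent to it, the arc merely shrinks or grows. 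The paper resolves this by a case analysis on whether $g(v_{k-1}),g(v_{l+1})$ agree in sign and, when they do, on whether the angle spanned by the zero-arc exceeds $\pi$; the non-collinearity hypothesis enters precisely to exclude $\theta=\pi$ in that last case.
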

\begin{remark}
To illustrate the result, observe that $\mathbb{P}^2$ only has two $\mathrm{H}-$trivial line bundles $\mathcal{O}(-1)$ and $\mathcal{O}(-2)$, but $\mathbb{P}^1\times \mathbb{P}^1$ has infinitely many $\mathrm{H}-$trivial line bundles $\mathcal{O}(a,b)$, where $a=-1$ or $b=-1$. $(1)$ of Figure \ref{fig:8} is the fan corresponding to $\mathbb{P}^2$ and $(2)$ of Figure \ref{fig:8} is the fan corresponding to $\mathbb{P}^1\times \mathbb{P}^1$.
\end{remark}
\begin{figure}[H]
  \includegraphics[width=0.5\textwidth]{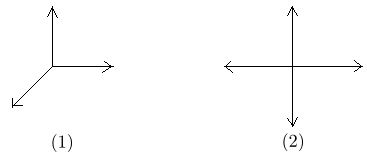}
  \caption{}
  \label{fig:8}
\end{figure}
\section{Proof of the main result}\label{H}
In this section, we give the proof of Theorem \ref{iff}.
We first prove the "if" direction of Theorem \ref{iff}.

Let $v_1,\ldots,v_t$ be $t$ vectors in $\mathbb{R}^2$ which are ordered clockwise. Let $f$ be a piecewise linear function on $\mathbb{R}^2$ which is linear in each cone spanned by $v_i$ and $v_{i+1}$. We regard $\geq0$ and $<0$ as different signs. We count the number of pairs of vectors $\{v_i,v_{i+1}\}\subset \{v_1,\ldots,v_t\}$ such that $f(v_i)$ and $f(v_{i+1})$ have different signs. We call it the number of sign changes of $f$ among $\{v_1,\ldots,v_t\}$.

\begin{lemma}\label{signch}
Let $v_1,\ldots,v_t$ be $t$ vectors in $\mathbb{R}^2$ which are ordered clockwise. Assume that $f$ is a linear function, angle $\theta$ between $v_1$ and $v_t$ is $\pi$ and $f(v_1)$ and $f(v_t)$ have different signs. Then the
the number of sign changes of $f$ among $\{v_1,\ldots,v_t\}$ is exactly one.
\end{lemma}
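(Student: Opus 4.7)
The plan is to track the sign of the linear function $f$ along the arc of directions swept out by $v_1,\ldots,v_t$, exploiting the fact that the zero locus of $f$ is a line through the origin which can meet a half-plane in at most a single ray emanating from the origin.

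Concretely, set $\ell:=\{x\in\mathbb{R}^2 : f(x)=0\}$. Since $f(v_1)$ and $f(v_t)$ differ in sign we have $f\not\equiv 0$, so $\ell$ is a line through the origin. The hypothesis that the angle between $v_1$ and $v_t$ is $\pi$ forces $v_t=-\lambda v_1$ for some $\lambda>0$, and the clockwise ordering with total angular span $\pi$ places $v_1,\ldots,v_t$ in a single closed half-plane $P$ whose boundary is the line $L:=\mathbb{R}v_1=\mathbb{R}v_t$. Parameterize directions in $P$ by angles $\alpha\in[0,\pi]$ so that $v_i$ has angle $\alpha_i$ with $0=\alpha_1\leq\alpha_2\leq\cdots\leq\alpha_t=\pi$.

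Because $f(v_t)=-\lambda f(v_1)$, the fact that these two values differ in sign under the convention ``$\geq 0$ versus $<0$'' rules out $f(v_1)=0$; hence $v_1,v_t\notin\ell$, and therefore $\ell\neq L$. Consequently $\ell\cap P$ is a single ray from the origin, corresponding to a unique angle $\alpha^*\in(0,\pi)$. For $\alpha_i<\alpha^*$ the vector $v_i$ lies strictly on the $v_1$-side of $\ell$, so $f(v_i)$ has the same nonzero sign as $f(v_1)$; for $\alpha_i>\alpha^*$ it lies strictly on the $v_t$-side, so $f(v_i)$ has the sign of $f(v_t)$; and if $\alpha_i=\alpha^*$ then $f(v_i)=0$.

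A short two-case check on whether $f(v_1)>0$ or $f(v_1)<0$ now shows that the sequence $(\mathrm{sgn}\, f(v_i))_{i=1}^{t}$ (in the ``$\geq 0$ versus $<0$'' convention) consists of a single run of one sign followed by a single run of the other, giving exactly one sign change between consecutive entries. The only point requiring genuine care is the placement of any $v_i$ lying on $\ell$: since $f(v_i)=0$ falls into the ``$\geq 0$'' class, such a vector joins the ``$\geq 0$'' run, and the two-case split verifies that this always occurs at the correct end of the transition so that the monotone block pattern is preserved. This is the step that uses the convention in an essential way, but otherwise the argument is a direct geometric reading of the sign of $f$ on a half-plane, with no further obstacle.
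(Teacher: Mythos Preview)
Your argument is correct and follows essentially the same approach as the paper's proof: both identify the zero line $\ell=\{f=0\}$, observe that it enters the interior of the half-plane spanned by $v_1,\ldots,v_t$, and conclude that the signs of $f(v_i)$ form two monotone blocks separated by a single transition. Your version is more detailed than the paper's---in particular you explicitly rule out $f(v_1)=0$ to ensure $\ell\neq L$, and you carefully verify that a vector lying on $\ell$ (with $f(v_i)=0$) is absorbed into the ``$\geq 0$'' block on the correct side of the transition---but these are refinements of the same geometric idea rather than a different route.
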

\begin{figure}[H]
  \includegraphics[width=0.23\textwidth]{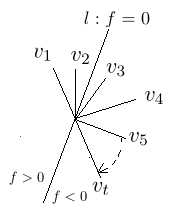}
  \caption{}
  \label{fig:7}
\end{figure}
\begin{proof}
Since $\theta= \pi$, we have that $v_1$ and $v_t$ are collinear. Also since $f(v_1)$ and $f(v_t)$ have different signs, the line $l=\{v\in\mathbb{R}^2|f(v)=0\}$ passes through the interior of the angle $\theta$. We know $f$ takes positive values on the vectors at one side of $l$, negative values on the vectors at another side of $l$, zero on the vectors on the line $l$. Thus there is exactly one sign change of $f$ among $\{v_1,\ldots,v_t\}$. See Figure \ref{fig:7}.
\end{proof}
\begin{proposition}\label{->inf}
In the assumption of Theorem \ref{iff}, if there is $\{i, j\}\subset\{1,2,\cdots,n\}$ such that $v_i$ and $v_j$ are collinear, $\mathbb{P}_{\Sigma}$ has infinitely many $\mathrm{H}-$trivial line bundles.
\end{proposition}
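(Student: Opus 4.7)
The plan is to construct from the collinear pair an explicit infinite family of $\mathrm{H}$-trivial line bundles and to verify $\mathrm{H}$-triviality by a direct sign-pattern analysis based on Proposition \ref{Coho} and Lemma \ref{signch}.

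After relabeling I may assume the collinear pair is $(v_1,v_k)$ for some $2\le k\le n$, and after choosing a $\mathbb{Z}$-basis of $N$ I may further assume $v_1=(p,0)$, $v_k=(-q,0)$ with $p,q\in\mathbb{Z}_{>0}$. The remaining rays partition into $A_1:=\{2,\dots,k-1\}$ and $A_2:=\{k+1,\dots,n\}$; both sets are nonempty by completeness of $\Sigma$, and the second coordinates $y_i=(v_i)_2$ are nonzero and of one common sign on $A_1$, the opposite sign on $A_2$. The family I propose is
\[
\mathcal{L}_m\;:=\;\mathcal{O}\Bigl(-E_1+m\sum_{i\in A_1}y_iE_i\Bigr),\qquad m\in\mathbb{Z}.
\]
The $\mathcal{L}_m$ are pairwise distinct because the class $D:=\sum_{i\in A_1}y_iE_i$ is nontorsion in $\mathrm{Pic}$: any relation $cD=\sum_i w(v_i)E_i$ in $\mathbb{Z}^n$ with $w=(w_1,w_2)\in N^*$ and $c\ne 0$ forces $w_1=0$ from the coefficient at $E_1$, then $w_2=c$ from any $i\in A_1$, and $w_2=0$ from any $i\in A_2$, a contradiction.

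To establish $\mathrm{H}$-triviality I would use that in dimension two $\mathcal{L}_m$ is $\mathrm{H}$-trivial iff, for every $f=(\alpha,\beta)\in\mathbb{Z}^2$, the subset $S_f:=\{i:(\mathbf{r}_f)_i\ge 0\}$ of $\{1,\dots,n\}$ is a nonempty proper connected cyclic arc --- the unique shape of $\mathrm{Supp}(\mathbf{r}_f)$ with vanishing reduced homology. With the representative $a_1=-1$, $a_i=my_i$ for $i\in A_1$, $a_i=0$ otherwise, the entries split into $(\mathbf{r}_f)_1=-1+\alpha p$, $(\mathbf{r}_f)_k=-\alpha q$, $(\mathbf{r}_f)_i=g(v_i)$ on $A_1$ with $g:=(\alpha,m+\beta)$, and $(\mathbf{r}_f)_i=h(v_i)$ on $A_2$ with $h:=(\alpha,\beta)$. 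The collinearity hypothesis ensures that each half-arc $v_1\!\to\!v_k$ (through $A_1$) and $v_k\!\to\!v_1$ (through $A_2$) spans angle exactly $\pi$ and that the endpoint evaluations $g(v_1)=h(v_1)=\alpha p$ and $g(v_k)=h(v_k)=-\alpha q$ depend only on $\alpha$. For $\alpha\ne 0$, Lemma \ref{signch} applied to $g$ on the first half-arc and to $h$ on the second gives exactly one sign change on each, so $S_f\cap A_1$ and $S_f\cap A_2$ are each contiguous sub-arcs; combined with the deterministic membership of $1$ (when $\alpha\ge 1$) and $k$ (when $\alpha\le 0$), these assemble into a single cyclic arc, nonempty and proper since it contains exactly one of $\{1,k\}$. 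The degenerate case $\alpha=0$ is handled directly: $S_f$ reduces to one of $\{k\}$, $\{2,\dots,k\}$, $\{k,\dots,n\}$, or $\{2,\dots,n\}$, each a nonempty proper arc.

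The main obstacle, and the point where collinearity is used essentially, is making this case analysis uniform in $m$. The decisive observation is that $m$ shifts only the second coordinate of $g$, the function controlling the entries on $A_1$, while leaving the data on $A_2\cup\{1,k\}$ untouched; this decouples the two applications of Lemma \ref{signch} and preserves the arc structure of $S_f$ for every $m\in\mathbb{Z}$.
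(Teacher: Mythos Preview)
Your argument is correct and follows essentially the same route as the paper's proof: choose a linear form $h$ vanishing on the collinear pair, build the line bundles $-E_p+\text{(multiple of }\sum_{h(v_i)>0}h(v_i)E_i\text{)}$, and verify $\mathrm{H}$-triviality by showing every $\mathbf r_f$ has exactly two cyclic sign changes via Lemma~\ref{signch} on each half-arc, with the degenerate case $f(v_p)=0$ (your $\alpha=0$) handled separately. The only cosmetic differences are that you make the infinite family explicit via the integer parameter $m$ and supply a distinctness argument, whereas the paper leaves the passage from one $\mathrm H$-trivial $\mathcal L$ to infinitely many implicit (varying $h$ over its integer multiples).
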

\begin{proof}
\begin{figure}[H]
  \includegraphics[width=0.9\textwidth]{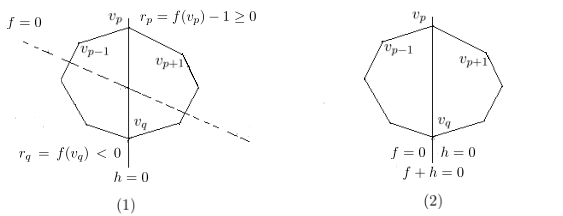}
  \caption{}
  \label{fig:2}
\end{figure}
We assume that $v_p$ and $v_q$ are collinear, and $v_1, v_2,\ldots, v_n$ are ordered clockwise. We take a linear function $h\in N^*$ such that $h(v_p)=h(v_q)=0$ and consider an element $\mathcal{L}=\mathcal{O}(\sum_{i\in I_{+}}h(v_i)E_i-E_p)$ in Picard group $\mathrm{Pic}(\mathbb{P}_{\mathbf{\Sigma}})$, where $I_{+}=\{i|h(v_i)>0\}$. Also let $I_{-}=\{i|h(v_i)<0\}$. By Remark \ref{coho2},
we have $$\mathrm{H}^j(\mathbb{P}_{\mathbf{\Sigma}},\mathcal{L})=\bigoplus_{f\in N^*}\mathrm{H}^{red}_{rk N-j-1}( Supp(\mathbf{r}_f)),$$where $\mathbf{r}_f=(a_i+f(v_i))_{i=1}^{n}$. In order to show $\mathrm{H}^*(\mathbb{P}_{\mathbf{\Sigma}},\mathcal{L})=0$, it is sufficient to show there are exactly two sign changes in $\mathbf{r}_f$ for each $f\in N^*$. Now we arbitrarily take an element $f\in N^*$. Let $\sum_{i}r_iE_i=\sum_{i\in I_{+}}h(v_i)E_i-E_p+\sum_{i=1}^{n}f(v_i)E_i$. We know that $r_i=(f+h)(v_i)$ for $i\in I_{+}\cup \{q\}$, $r_i=f(v_i)$ for $i\in I_{-}\cup \{q\}$ and $r_p=-1+f(v_p)$. We consider the following two cases: $r_p\neq -1$ and $r_p= -1$.

First, in the case that $r_p\neq-1$, we have $f(v_p)\neq0$ and therefore $f(v_q)\neq0$. They have different signs since $f$ is a linear function and $v_p$ and $v_q$ are collinear.  If $f(v_p)>0$ and $f(v_q)<0$, we have $r_q=f(v_q)<0$ and $r_p=f(v_p)-1\geq0$ since $f(v_p)$ is integer. Thus the sign of $f(v_i)$ is the same as the sign of $r_i$ for all $i\in I_{-}\cup\{p,q\}$. By Lemma \ref{signch}, the number of sign changes of $f$ among $\{v_i|i\in I_{-}\cup\{p,q\}\}$ is exactly one. Thus there is exactly one sign change in $\{r_i|i\in I_{-}\cup\{p,q\}\}$. See $(1)$ of Figure \ref{fig:2}. If $f(v_p)<0$ and $f(v_q)>0$, we have $r_q=f(v_q)>0$ and $r_p=f(v_p)-1<0$. Analogously, there is exactly one sign change in $\{r_i|i\in I_{-}\cup\{p,q\}\}$.
\begin{figure}[H]
  \includegraphics[width=1.1\textwidth]{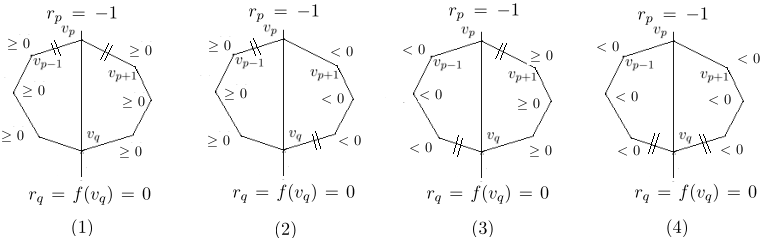}
  \caption{}
  \label{fig:6}
\end{figure}
 When $f(v_p)\neq0$ and $f(v_q)\neq0$, we also have $(f+h)(v_p)\neq0$ and $(f+h)(v_q)\neq0$. They have different signs since $f+h$ is a linear function and $v_p$ and $v_q$ are collinear. If $(f+h)(v_p)>0$ and $(f+h)(v_q)<0$, we have $f(v_p)>0$ and $f(v_q)<0$ since $h(v_p)=h(v_q)=0$. Then we have $r_q=f(v_q)<0$ and $r_p=f(v_p)-1\geq0$. If $(f+h)(v_p)<0$ and $(f+h)(v_q)>0$, we have $f(v_p)<0$ and $f(v_q)>0$ since $h(v_p)=h(v_q)=0$. Then we have $r_q=f(v_q)>0$ and $r_p=f(v_p)-1<0$. Thus the sign of $(f+h)(v_i)$ is the same as the sign of $r_i$ for all $i\in I_{+}\cup\{p,q\}$. By Lemma \ref{signch}, the number of sign changes of $f+h$ among $\{v_i|i\in I_{+}\cup\{p,q\}\}$ is exactly one. Thus there is exactly one sign change in $\{r_i|i\in I_{+}\cup\{p,q\}\}$. Thus there are exactly two sign changes in $(r_i)_{i=1}^{n}$.

Second, in the case that $r_p=-1$, we have $r_q=f(v_q)=f(v_p)=0$. Thus $(f+h)(v_p)=(f+h)(v_q)=0$. See $(2)$ of Figure \ref{fig:2}. Then we have $r_i=f(v_i)\geq 0$ for all $i\in I_{-}$ or $r_i=f(v_i)< 0$ for all $i\in I_{-}$. Also we have $r_i=(f+h)(v_i)\geq 0$ for all $i\in I_{+}$ or $r_i=(f+h)(v_i)< 0$ for all $i\in I_{+}$. There are four cases in total. In each case, there are exactly one sign changes in $\{r_i|i\in I_{+}\cup\{p,q\}\}$ and one sign changes in $\{r_i|i\in I_{-}\cup\{p,q\}\}$. Thus there are exactly two sign changes in $(r_i)_{i=1}^{n}$. See $(1), (2), (3), (4)$ of Figure \ref{fig:6}.

Thus we have infinitely many $\mathrm{H}-$trivial line bundles.
\end{proof}

Our next goal is to show that if there are infinitely many $\mathrm{H}-$trivial line bundles on the dimension two proper toric DM stack $\mathbb{P}_{\mathbf{\Sigma}}$, then there exists $\{i, j\}\subset\{1,2,\cdots,n\}$ such that $v_i$ and $v_j$ are collinear. First, we prove the following lemma.
\begin{lemma}\label{g}
For any non-zero element $\mathcal{L}\in \mathrm{Pic}_{\mathbb{R}}(\mathbb{P}_{\mathbf{\Sigma}})$ and any $j\in \{1,2,\ldots,n\}$, there exists a $\mathbf{\Sigma}-$piece-wise linear function $g$ on $\mathbb{R}^2$ such that $\mathcal{L}=\sum_{i=1}^{n}g(v_i)E_i$ in $\mathrm{Pic}_{\mathbb{R}}(\mathbb{P}_{\mathbf{\Sigma}})$ and $g(v_j)=g(v_{j+1})=0$.
\end{lemma}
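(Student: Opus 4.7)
The plan is to exploit the simple fact that a $\mathbf{\Sigma}$-piecewise linear function on $\mathbb{R}^2$ is uniquely determined by its values on the rays $v_1,\ldots,v_n$, so the lemma reduces to finding a representative of $\mathcal{L}$ whose $j$-th and $(j{+}1)$-st coordinates both vanish.

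First I would start from any representative: by Proposition \ref{pic}, write $\mathcal{L}=\sum_{i=1}^{n}a_iE_i$ for some $(a_1,\ldots,a_n)\in\mathbb{R}^n$ (such a lift exists since $\mathrm{Pic}_{\mathbb{R}}(\mathbb{P}_{\mathbf{\Sigma}})$ is by definition a quotient of $\mathbb{R}^n$). Recall that modifying $(a_1,\ldots,a_n)$ by $(f(v_1),\ldots,f(v_n))$ for any $f\in N^*_{\mathbb{R}}$ yields another representative of $\mathcal{L}$. The key linear-algebra observation is that $v_j$ and $v_{j+1}$ (indices taken cyclically) span a maximal two-dimensional cone of $\Sigma$, hence are linearly independent in $N_{\mathbb{R}}=\mathbb{R}^2$. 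Consequently the evaluation map $N^*_{\mathbb{R}}\to\mathbb{R}^2$, $f\mapsto(f(v_j),f(v_{j+1}))$, is an isomorphism, so there is a unique $f\in N^*_{\mathbb{R}}$ with $f(v_j)=-a_j$ and $f(v_{j+1})=-a_{j+1}$.

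Next I would define the desired function: set $c_i:=a_i+f(v_i)$ for every $i$ and let $g$ be the unique continuous function on $\mathbb{R}^2$ that is linear on each maximal cone of $\Sigma$ with $g(v_i)=c_i$. Existence and uniqueness of $g$ follow because on each maximal cone $\mathbb{R}_{\geq 0}v_i+\mathbb{R}_{\geq 0}v_{i+1}$ the independent vectors $v_i,v_{i+1}$ determine a unique linear function taking prescribed values $c_i,c_{i+1}$, and these local definitions agree automatically on the common rays of adjacent cones. By construction $g(v_j)=c_j=a_j+f(v_j)=0$ and similarly $g(v_{j+1})=0$, and $\sum_ig(v_i)E_i=\sum_i a_iE_i+\sum_i f(v_i)E_i=\mathcal{L}$ in $\mathrm{Pic}_{\mathbb{R}}(\mathbb{P}_{\mathbf{\Sigma}})$ because the second sum is exactly a relation killed in the quotient.

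There is essentially no obstacle here: everything rests on the two-dimensionality of $N$, which forces any two rays that are consecutive in the cyclic order to be a basis of $N_{\mathbb{R}}$. The non-vanishing hypothesis on $\mathcal{L}$ is not needed for existence (for $\mathcal{L}=0$ one may simply take $g\equiv 0$); it is presumably stated because the lemma will be applied later in a context where $\mathcal{L}\neq 0$ is the case of interest.
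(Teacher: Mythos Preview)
Your proof is correct and follows essentially the same approach as the paper: both start from an arbitrary representative $\sum_i a_i E_i$, use the linear independence of the consecutive rays $v_j,v_{j+1}$ to produce a linear functional $f$ with prescribed values at those two points, and subtract it off to obtain the desired $\mathbf{\Sigma}$-piecewise linear $g$. Your remark that the hypothesis $\mathcal{L}\neq 0$ is not actually needed for the statement is also correct.
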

\begin{proof}
Assume $\mathcal{L}=\sum_{i}a_iE_i$, where $a_i\in \mathbb{R}$, we have a $\mathbf{\Sigma}-$piece-wise linear function $h$ on $\mathbb{R}^2$ such that $h(v_i)=a_i$. Also, we take a linear function $f$ on $\mathbb{R}^2$ such that $f(v_j)=a_j$ and $f(v_{j+1})=a_{j+1}$ and let $g=h-f$. Then we have $g(v_j)=0$, $g(v_{j+1})=0$ and $\sum_{i}a_iE_i=\sum_{i}g(v_i)E_i$ in $\mathrm{Pic}_{\mathbb{R}}(\mathbb{P}_{\mathbf{\Sigma}})$.
\end{proof}
\begin{lemma}\label{interior}
Assume that $v_i$ and $v_j$ are not collinear for any $\{i, j\}\subset\{1,2,\cdots,n\}$. Then any non-zero element $\mathcal{L}\in \mathrm{Pic}_{\mathbb{R}}(\mathbb{P}_{\mathbf{\Sigma}})$ is contained in the interior of $Z_I$ from Definition \ref{ZI} for some $I\in \Delta$.
\end{lemma}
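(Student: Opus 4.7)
The plan is to prove the contrapositive: if $\mathcal{L}$ lies in no $\mathrm{int}(Z_I)$ for $I\in\Delta$, then $\mathcal{L}=0$. The condition $\mathcal{L}\in\mathrm{int}(Z_I)$ is equivalent to $\mathcal{L}$ admitting a representative $\sum r_i E_i$ with $r_i>0$ for $i\in I$ and $r_i<0$ for $i\notin I$. In dimension two, $I\in\Delta$ iff $I$ is $\emptyset$, $\{1,\ldots,n\}$, or a union of at least two disjoint arcs in the cyclic order on $\{1,\ldots,n\}$, so the task reduces to producing a representative of $\mathcal{L}$ whose cyclic sign sequence is either constant or has at least four cyclic sign changes.

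Fix any representative $(a_1,\ldots,a_n)$ of $\mathcal{L}$; every representative then has the form $(a_i+f(v_i))_{i=1}^n$ for some $f\in N^*_{\mathbb{R}}$. Consider the line arrangement $\{L_i\}_{i=1}^n$ in $N^*_{\mathbb{R}}\cong\mathbb{R}^2$ defined by $L_i=\{f: a_i+f(v_i)=0\}$. Because no two $v_i$ are collinear, the $L_i$ have pairwise distinct directions, and they are all concurrent at a single point $f^*$ if and only if $\mathcal{L}=0$ (with $f^*(v_i)=-a_i$ for all $i$). The chambers of this arrangement parametrize the sign patterns of representatives of $\mathcal{L}$ with no zero entry.

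Assume $\mathcal{L}\neq 0$, so the $L_i$'s are not all concurrent. For each $j$, Lemma \ref{g} furnishes the representative $g_j$ with $g_j(v_j)=g_j(v_{j+1})=0$, which corresponds to the point $f_j=L_j\cap L_{j+1}$. In the generic case where $f_j$ lies on no other $L_k$, exactly four chambers meet at $f_j$, indexed by $(\epsilon_j,\epsilon_{j+1})\in\{+,-\}^2$, while the remaining signs $\sigma_i=\mathrm{sign}(g_j(v_i))$ are locally constant. Let $c$ be the number of sign changes in the linear sequence $\sigma_{j+2},\ldots,\sigma_{j-1}$, and let $d(\epsilon_j,\epsilon_{j+1})$ be the number of changes contributed by the closure $(\sigma_{j-1},\epsilon_j,\epsilon_{j+1},\sigma_{j+2})$. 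A direct case analysis on $c\in\{0,1,2,3\}$ (with $c\geq 4$ trivial) shows that at least one of the four choices of $(\epsilon_j,\epsilon_{j+1})$ produces $c+d=0$ or $c+d\geq 4$, giving a chamber whose sign pattern corresponds to some $I\in\Delta$ and contradicting the standing assumption.

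It remains to handle the degenerate case in which $f_j$ also lies on some $L_k$ with $k\neq j,j+1$: then strictly more chambers meet at $f_j$, the additional flexibility in $\epsilon_k$ only enlarges the family of achievable sign patterns, and the same case analysis yields a good chamber. If such degeneracy occurs at every $j$ simultaneously, the resulting concurrencies, together with the hypothesis that the $L_i$ have pairwise distinct directions, force all the $L_i$ to pass through one common point, which gives $\mathcal{L}=0$ as required. The main obstacle I anticipate is the explicit case-by-case enumeration in the generic step (especially $c=2$, where the four totals $c+d$ cluster near $2$ and must be distinguished carefully), together with making the informal "degeneracy forces concurrency" argument rigorous in all sub-configurations.
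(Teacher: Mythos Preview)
Your outline parallels the paper's proof closely: both use Lemma~\ref{g} to write $\mathcal{L}=\sum g(v_i)E_i$ with $g(v_j)=g(v_{j+1})=0$ and then perturb $g$ by a small linear functional to obtain a strictly signed representative whose cyclic sign pattern is constant or has $\ge 4$ changes. Your generic case analysis is actually fine, and the case $c=2$ you flag is not the obstacle: with $\sigma_{j-1}=\sigma_{j+2}$ (forced by parity), three of the four choices of $(\epsilon_j,\epsilon_{j+1})$ give $c+d=4$.

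The real gap is your degenerate case, and that is precisely where the non-collinearity hypothesis does all the work. Your claim that ``additional flexibility in $\epsilon_k$ only enlarges the family of achievable sign patterns'' is not correct as stated: when $m$ lines pass through $f_j$ there are only $2m$ chambers, not $2^m$, so the achievable sign patterns on the degenerate indices are exactly those of the form $(\mathrm{sign}\,f'(v_i))$ for a single linear $f'$. In particular your count $c$ is no longer well-defined and the four-case table does not apply. Your fallback (``if every $f_j$ is degenerate then all $L_i$ are concurrent'') is a nontrivial incidence-geometric statement you do not prove, and even granting it, it does not repair the preceding sentence.

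The paper confronts this head-on. Instead of trying to find a generic $f_j$, it takes the connected component $J=\{k,\ldots,l\}$ of the zero set of $g$ and looks at the angle $\theta=\angle(v_k,v_l)$ through $J$. The hypothesis ``no $v_i,v_j$ collinear'' is used exactly once, to rule out $\theta=\pi$; then for $\theta<\pi$ one can choose the perturbing linear $f'$ positive on all of $J$ and land in $\mathrm{int}\,Z_{\{1,\ldots,n\}}$, while for $\theta>\pi$ one chooses $f'$ with $f'(v_k)<0$, $f'(v_l)<0$, which forces $f'(v_{i_0})>0$ for some interior $i_0\in J$ (since a half-plane has angular width $\pi$), producing a disconnected $I_{>0}$. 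This angle dichotomy is the missing idea in your sketch; it replaces both the imprecise ``more flexibility'' step and the unproven concurrency fallback with a single clean geometric observation.
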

\begin{proof}
 By Lemma \ref{g}, we assume $\mathcal{L}=\sum_{i}g(v_i)E_i$, where $g$ is a $\mathbf{\Sigma-}$piece-wise linear function on $\mathbb{R}^2$ such that $g(v_1)=g(v_2)=0$. Let $I_0=\{i|g(v_i)=0\}$. Since $\mathcal{L}$ is nonzero, we have $I_0\neq \{1,2,\ldots,n\}$. Let $J$ be the connected component of $I_0$ containing $v_1$ and $v_2$. We write the points in $J$ in clockwise order as $v_k,\ldots,v_1,v_2,\ldots,v_l$. Then we consider the following cases.

 First, we consider the case when $g(v_{k-1})$ and $g(v_{l+1})$ have different signs. Without loss of generality, we assume $g(v_{k-1})>0$ and $g(v_{l+1})<0$. We can take a linear function $f'$ satisfying that $f'(v_2)>0$, $f'(v_1)<0$, $0\neq f'(v_i)$ for all $i\in I_0$ and $|f'(v_i)|<|g(v_i)|$ for all $i\in \{1,2,\ldots,n\}\backslash I_0 $. Let $g'=g+f'$. Then we have that $g'(v_{k-1})>0, g'(v_1)<0, g'(v_2)>0, g'(v_{l+1})<0$ and $g'(v_i)\neq0$ for $i=1,\ldots,n$. Thus we have $I_{>0}=\{i|g'(v_i)>0\}$ is not connected. So $I_{>0} \in \Delta$. Since $0\neq f'(v_i)$ for all $i\in I_0$, we have $g'(v_i)=(g+f)(v_i)\neq 0$ for all $i\in I_0$. Since $|f'(v_i)|<|g(v_i)|$ for all $i\in \{1,2,\ldots,n\}\backslash I_0 $, we have $g'(v_i)=(g+f)(v_i)\neq 0$ for $i\in \{1,2,\ldots,n\}\backslash I_0$. So we have $g'(v_i)\neq0$ for all $i$. Thus $$\mathcal{L}=\sum_{i\in I_{>0}}\alpha_iE_i+\sum_{i\notin I_{>0}}\alpha_iE_i$$ in $\mathrm{Pic}_{\mathbb{R}}(\mathbb{P}_{\mathbf{\Sigma}})$, where $\alpha_i=g'(v_i)\neq 0$ for all $i$ and $I_{>0}=\{i|g'(v)>0\}$. This implies that $\mathcal{L}$ is in the interior of $ C_{I_{>0}}$. See $(1)$ of Figure \ref{fig:1}.
\begin{figure}[H]
  \includegraphics[width=0.9\textwidth]{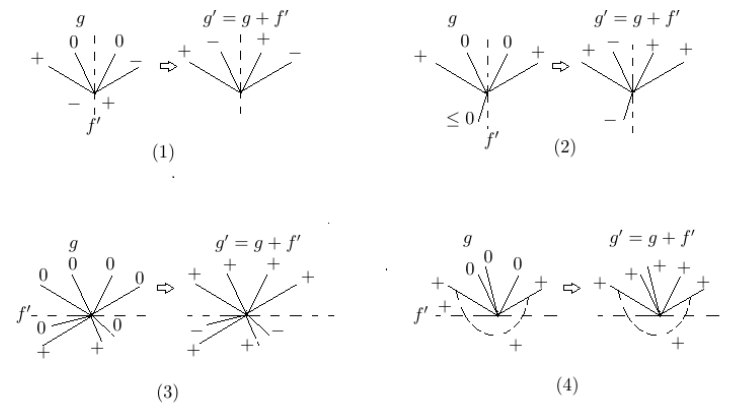}
  \caption{}
  \label{fig:1}
\end{figure}
 Second, we consider the case that $g(v_{k-1})>0$ and $g(v_{l+1})>0$ have the same sign. We will prove the case when both $g(v_{k-1})$ and $g(v_{l+1})$ are positive, and leave the completely analogous negative case to the reader. If there is some element $j\notin J\cup\{k-1,l+1\}$ such that $g(v_j)\leq0$,
 we can take a linear function $f'$ satisfying that $f'$ is negative on $v_j$ and on one or both of $v_1$ and $v_2$, $0\neq f'(v_i)$ for all $i\in I_0$ and $|f'(v_i)|<|g(v_i)|$ for all $i\in \{1,2,\ldots,n\}\backslash I_0$. Let $g'=g+f'$. We have $g'(v_{k-1})>0$, $g'(v_{l+1})>0$, $g'(v_j)<0$ and one or both of $g'(v_1)$ and $g'(v_2)$ is negative. Also we have $g'(v_i)\neq0$ for $i=1,\ldots,n$. Thus $I_{>0}=\{i|g'(v_i)>0\}$ is not connected. So $I_{>0} \in \Delta$. With the same argument as before, we obtain $\mathcal{L}$ is in the interior of $ C_{I_{>0}}$. See $(2)$ of Figure \ref{fig:1}.

It remains to consider the case when $g(v_i)>0$ for all $i \notin J\cup\{k-1,l+1\}$. Now $I_0$ is connected, we have $I_0=J$. Let $\theta$ be the angle formed by $v_k$ and $v_l$ and containing $v_1$. We know that $\theta\neq\pi$ by assumption. If $\theta > \pi$, we can take a linear function $f'$ satisfying that $f'(v_k)<0$ and $f'(v_l)<0$, $0\neq f'(v_i)$ for all $i\in I_0$ and $|f'(v_i)|<|g(v_i)|$ for all $i\in \{1,2,\ldots,n\}\backslash I_0$. There must exists an element $i_0\in J$ such that $f'(v_{i_0})>0$ since $\theta > \pi$. Let $g'=g+f'$, we have $g'(v_{k-1})>0$, $g'(v_k)<0$, $g'(v_{i_0})>0$, $g'(v_l)<0$, $g'(v_{l+1})>0$ and $g'(v_{i_0})<0$. Also we have $g'(v_i)\neq0$ for $i=1,\ldots,n$. Thus $I_{>0}=\{i|g'(v_i)>0\}$ is not connected. So $I_{>0} \in \Delta$. With the same argument as before, we obtain $\mathcal{L}$ is in the interior of $ C_{I_{>0}}$. See $(3)$ of Figure \ref{fig:1}. If $\theta < \pi$, we can take a linear function $f'$ satisfying $f'(v_i)>0$ for all $i\in I_0$ and $|f'(v_i)|<|g(v_i)|$ for all $i\in \{1,2,\ldots,n\}\backslash I_0$. Let $g'=g+f'$, we have $\{i|g'(v_i)>0\}=\{1,\ldots,n\} \in \Delta$. With the same argument as before, we obtain $\mathcal{L}$ in the interior of $ C_{\{1,\ldots,n\}}$. See $(4)$ of Figure \ref{fig:1}.

\end{proof}
\begin{definition}\label{h_I}
Let $V=\mathrm{Pic}_{\mathbb{R}}(\mathbb{P}_{\mathbf{\Sigma}})$. We pick a Euclidean metric on $V$.
Let $F_{I,1},\ldots,F_{I,k_I}$ be faces of cone $Z_I$ and $n_{I,i}$ be the unit normal vector of $F_{I,i}$ such that the inner product of $n_{I,i}$ with vector inside the cone is nonnegative. We define a linear function $h_{I,i}$ on $V$ by $$h_{I,i}(v)=\langle v, n_{I,i} \rangle$$ for $v\in V$, where $\langle v, n_{I,i} \rangle$ is inner product. Also, we define another two functions $h_I(v)$ and $\mathcal{E}(v)$ on $V$ by
\begin{equation*}
h_I(v)=\min\limits_{1\leq i\leq k_I}h_{I,i}(v) \text{ and } \mathcal{E}(v)=\max\limits_{ I\in \Delta}h_I(v)
\end{equation*}
 for $v\in V$. We know $\mathcal{E}(v)$ is a continuous function on $V$ since $h_{I,i}(v)$ is a continuous function on $V$ for each $I\in\Delta$ and $i=1,\ldots,k_I$.
\end{definition}

\begin{remark}\label{epsilon}
Let $S$ be the unit sphere in $V$. We know that the distance from a point $v\in Z_I$ to the face $F_{I,i}$ equals $h_{I,i}(v)$.
Also by Lemma \ref{interior}, we know that any $s\in S$
is contained in the interior of $Z_I$ for some $I\in \Delta$. So $\mathcal{E}(s)>0$ for all $s\in S$. Since $S$ is compact, there exists $\epsilon>0$ such that $\mathcal{E}(s)>\epsilon$ for all $s\in S$.
\end{remark}

We will show that arbitrary shifts of Forbidden cones cover almost all of $V$.
 We have $Z_I=\{v\in V | h_{I,i}(v)\geq 0 \text{ for } i=1,\ldots,k_I\}$. Moreover, for each $I\in \Delta$, we arbitrarily chose an element $x_I\in Z_I$.  Now we have $x_I+Z_I=\{v\in V|h_{I,i}(v)\geq h_{I,i}(x_I)\text{ for } i=1,\ldots,k_I\}$. Then we have the following lemma.

\begin{lemma}\label{d}
Let $x_I$ be arbitrary points in $Z_I$.
Assume that for any non-zero point $v\in V$, there exists some $I\in \Delta$ such that $v$ is in the interior of $Z_I$. Then there is a positive number $d$ such that if $|v|>d$, we have $v\in  x_I+Z_I$ for some $I\in\Delta$.
\end{lemma}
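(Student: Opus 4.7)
The plan is to combine the compactness argument of Remark \ref{epsilon} with the fact that the functions $h_{I,i}$ and $h_I$ are positively homogeneous, so that the ``positive distance from the boundary'' estimate on the unit sphere $S$ scales linearly with $|v|$.

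First I would observe that each $h_{I,i}(v)=\langle v,n_{I,i}\rangle$ is linear, hence $h_I(v)=\min_i h_{I,i}(v)$ and $\mathcal{E}(v)=\max_{I\in\Delta}h_I(v)$ are positively homogeneous of degree one: $h_I(\lambda v)=\lambda h_I(v)$ and $\mathcal{E}(\lambda v)=\lambda \mathcal{E}(v)$ for $\lambda\geq 0$. Combined with Remark \ref{epsilon}, which gives an $\epsilon>0$ with $\mathcal{E}(s)>\epsilon$ for every $s\in S$, this yields
\begin{equation*}
\mathcal{E}(v)\;=\;|v|\,\mathcal{E}\!\left(\tfrac{v}{|v|}\right)\;>\;\epsilon\,|v|
\end{equation*}
for every non-zero $v\in V$. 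Note that the hypothesis of the lemma is exactly what licenses the use of Lemma \ref{interior} inside Remark \ref{epsilon}, so $\epsilon$ exists here even though we are no longer assuming the collinearity condition explicitly.

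Next I would use the finiteness of $\Delta$ and of the collection of faces of each $Z_I$ to set
\begin{equation*}
M\;:=\;\max_{I\in\Delta}\;\max_{1\leq i\leq k_I} h_{I,i}(x_I),
\end{equation*}
which is a finite non-negative number (non-negative because $x_I\in Z_I$ and $n_{I,i}$ has non-negative inner product with vectors inside $Z_I$). I would then define $d:=M/\epsilon$. If $|v|>d$, then $\mathcal{E}(v)>\epsilon|v|>M$, so by definition of $\mathcal{E}$ there exists $I\in\Delta$ with $h_I(v)>M$, i.e. $h_{I,i}(v)>M\geq h_{I,i}(x_I)$ for every face index $i$. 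Using the characterization $x_I+Z_I=\{v\in V\mid h_{I,i}(v)\geq h_{I,i}(x_I)\text{ for all }i\}$ stated just before the lemma, this gives $v\in x_I+Z_I$, as required.

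I do not expect a serious obstacle: the lemma is essentially a quantitative restatement of Lemma \ref{interior} via compactness of $S$ and homogeneity of the face-distance functions. The only mild subtlety is keeping track of the fact that the $x_I$ are arbitrary (so $M$ depends on the choices) but still uniformly bounded because $\Delta$ is finite, which is what makes the single constant $d=M/\epsilon$ work for all $I\in\Delta$ simultaneously.
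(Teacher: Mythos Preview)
Your proposal is correct and is essentially the same argument as the paper's: define $d$ to exceed $\frac{1}{\epsilon}\max_{I,i}h_{I,i}(x_I)$, use positive homogeneity of $\mathcal{E}$ together with the compactness bound $\mathcal{E}(s)>\epsilon$ on the unit sphere from Remark~\ref{epsilon}, and conclude via the description $x_I+Z_I=\{v\mid h_{I,i}(v)\geq h_{I,i}(x_I)\text{ for all }i\}$. The only cosmetic difference is that you package the bound as $M=\max_{I,i}h_{I,i}(x_I)$ and set $d=M/\epsilon$, whereas the paper simply requires $d>\frac{1}{\epsilon}h_{I,i}(x_I)$ for all $I,i$.
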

\begin{proof}
Pick $d$ such that $d>\frac{1}{\epsilon}h_{I,i}(x_I)$ for all $I$ and $1\leq i\leq k_I$. By Remark \ref{epsilon}, we have $\mathcal{E}(\frac{v}{|v|})>\epsilon$. By the definition of the function $\mathcal{E}$, there exists some $I\in \Delta$ such that $h_I(\frac{v}{|v|})>\epsilon$. Then $h_{I,i}(\frac{v}{|v|})>\epsilon$ for all $1\leq i\leq k_I$. Thus $h_{I,i}(v)>\epsilon |v|>\epsilon d> h_{I,i}(x_I)$ for all $1\leq i\leq k_I$. Then $h_{I,i}(v-x_I)>0$ for all $1\leq i\leq k_I$. This implies $v-x_I\in Z_I$.
\end{proof}

We apply Lemma \ref{d} to the shifts of the forbidden cones which have the property that their lattice points are in the forbidden sets. Such shifts exist by Proposition \ref{r_I}.
Since we have the assumption that $N=\sum_{i}^{n}\mathbb{Z}v_i$, then by Proposition \ref{r_I}, for each $I\in\Delta$, we take a point $r_I$ in $FS_{I}$ such that $r_I+(Z_I \cap \mathbb{Z}^k) \subseteq FS_{I}$, where $FS_{I}$ is the Forbidden set corresponding to $I$.
\begin{proposition}\label{inf->}
In the assumption of Theorem \ref{iff},
if no $v_i$ and $v_j$ are collinear, then $\mathbb{P}_{\mathbf{\Sigma}}$ has finitely many $\mathrm{H}-$trivial line bundles.
\end{proposition}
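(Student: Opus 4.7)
The plan is to use the preceding machinery to confine every $\mathrm{H}$-trivial line bundle to a bounded region of $\mathrm{Pic}_{\mathbb{R}}(\mathbb{P}_{\mathbf{\Sigma}})$, from which finiteness follows because $\mathrm{Pic}(\mathbb{P}_{\mathbf{\Sigma}})$ is a finitely generated abelian group.

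First I would note that the hypothesis that no $v_i$ and $v_j$ are collinear is exactly what is needed for Lemma \ref{interior}: every non-zero $\mathcal{L} \in \mathrm{Pic}_{\mathbb{R}}(\mathbb{P}_{\mathbf{\Sigma}})$ lies in the interior of $Z_I$ for some $I \in \Delta$. Via compactness of the unit sphere this upgrades to the positive lower bound $\epsilon > 0$ appearing in Remark \ref{epsilon}, so the hypothesis of Lemma \ref{d} holds.

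Next, for each $I \in \Delta$, I would invoke Proposition \ref{r_I} to pick a lattice point $r_I \in FS_I$ satisfying $r_I + (Z_I \cap \mathbb{Z}^k) \subseteq FS_I$. Applying Lemma \ref{d} with these shifts as the $x_I$, one obtains a constant $d > 0$ such that any $v \in \mathrm{Pic}_{\mathbb{R}}$ with $|v| > d$ satisfies $v \in r_I + Z_I$ for some $I$. Now if $\mathcal{L} \in \mathrm{Pic}(\mathbb{P}_{\mathbf{\Sigma}})$ has image of norm larger than $d$, then $\mathcal{L} - r_I$ is a lattice point of $Z_I$, and by the defining property of $r_I$ we conclude $\mathcal{L} = r_I + (\mathcal{L} - r_I) \in FS_I$; by Proposition \ref{Htrivial}, $\mathcal{L}$ cannot be $\mathrm{H}$-trivial. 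Contrapositively, every $\mathrm{H}$-trivial line bundle has image of norm at most $d$ in $\mathrm{Pic}_{\mathbb{R}}$, and since $\mathrm{Pic}(\mathbb{P}_{\mathbf{\Sigma}})$ is finitely generated its image is a discrete lattice in $\mathrm{Pic}_{\mathbb{R}}$, so only finitely many elements fit inside the closed ball of radius $d$ (with at most finitely many torsion preimages per image).

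The substantive content has already been absorbed into Lemma \ref{interior} (where the non-collinearity assumption is actually used) and Proposition \ref{r_I} (which relies on $N = \sum_i \mathbb{Z} v_i$), so what remains here is essentially the assembly of the pieces. The one point to coordinate carefully is the real-versus-integer transition: Lemma \ref{d} gives a containment in the real vector space $\mathrm{Pic}_{\mathbb{R}}$, whereas $\mathrm{H}$-triviality is a condition on elements of $\mathrm{Pic}$ and requires honest membership in some $FS_I$. This is precisely what the integrality of the shift $r_I$ from Proposition \ref{r_I} is designed to bridge, since then $\mathcal{L} - r_I$ is automatically a lattice point whenever $\mathcal{L}$ is.
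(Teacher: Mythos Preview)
Your proposal is correct and follows essentially the same route as the paper: invoke Lemma \ref{interior} under the non-collinearity hypothesis, choose the shifts $r_I\in FS_I$ via Proposition \ref{r_I}, apply Lemma \ref{d} to confine all $\mathrm{H}$-trivial line bundles to a ball of radius $d$, and finish using finiteness of the torsion part of $\mathrm{Pic}(\mathbb{P}_{\mathbf{\Sigma}})$. Your explicit remark on the real-versus-integer bridge (that integrality of $r_I$ forces $\mathcal{L}-r_I$ to be a lattice point of $Z_I$) makes more visible a step the paper leaves implicit, but the argument is otherwise identical.
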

\begin{proof}
Assume $v_i$ and $v_j$ are not collinear for any $\{i, j\}\subset\{1,2,\cdots,n\}$. Then by Lemma \ref{interior}, any non-zero element $\mathcal{L}\in \mathrm{Pic}(\mathbb{P}_{\mathbf{\Sigma}}) \subset \mathrm{Pic}_{\mathbb{R}}(\mathbb{P}_{\mathbf{\Sigma}})$ is contained in the interior of $Z_I$ for some $I\in \Delta$. Then by Lemma \ref{d}, there is a positive number $d$ such that $|\mathcal{L}|>d$ implies $\mathcal{L}\in  r_I+Z_I$ for some $I\in \Delta$. Thus $\mathcal{L}$ is in $FS_I$ for some $I\in \Delta$. By Proposition \ref{Htrivial}, we obtain $\mathcal{L}$ is not $\mathrm{H}-$trivial. Thus all $\mathrm{H}-$trivial line bundles are contained in the ball in $\mathrm{Pic}_{\mathbb{R}} (\mathbb{P}_{\mathbf{\Sigma}})$ with radius $d$ and center at the origin. Because the torsion part of $\mathrm{Pic}(\mathbb{P}_{\mathbf{\Sigma}})$ is finite, this implies there are finitely many $\mathrm{H}-$trivial line bundles.
\end{proof}

\begin{proof}[Proof of Theorem \ref{iff}]
The result follows from Proposition \ref{->inf} and Proposition \ref{inf->}.
\end{proof}

\section{Further description of the infinite set of $\mathrm{H}-$trivial line bundles}\label{description}
In this section, when the set of $\mathrm{H}-$trivial line bundles is infinite, we show the set is of the form "finite set $+$ finite set of lines ". Thus the number of $\mathrm{H}-$trivial line bundles in a ball of radius $\alpha$
grows at most as constant times $\alpha$.

We want to show that Lemma \ref{interior} almost holds, in the sense that there are finitely many exceptions, up to scaling.
\begin{lemma}\label{notin}
 Let $\mathcal{L}$ be a point in $V=\mathrm{Pic}_{\mathbb{R}}(\mathbb{P}_{\mathbf{\Sigma}})$. Then $\mathcal{L}$ is not contained in the interior of $Z_I$ for any $I\in \Delta$ if and only if there exists collinear $v_p$ and $v_q$ such that $\mathcal{L}=\sum_{i\in J^+}h(v_i)E_i$ in $V$ for a linear function $h$ on $(\mathbb{R}^{2})^*$, where $h(v_p)=h(v_q)=0$ and $J^+=\{i\in\{1,\ldots,n\}| h(v_i)>0\}$.
\end{lemma}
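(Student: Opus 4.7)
The plan is to prove the two implications separately, exploiting Lemma~\ref{signch} for the easy direction and running the case analysis of Lemma~\ref{interior} in reverse for the harder one.

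For $(\Leftarrow)$, I would suppose $\mathcal{L}=\sum_{i\in J^+}h(v_i)E_i$ for a collinear pair $v_p,v_q$ and a linear $h$ with $h(v_p)=h(v_q)=0$, and derive a contradiction from the assumption $\mathcal{L}\in\mathrm{int}(Z_I)$ for some $I\in\Delta$. Any representative of $\mathcal{L}$ in $\mathbb{R}^n$ has the form $\alpha_i=h(v_i)\chi_{J^+}(i)+f(v_i)$ for some $f\in N^*_{\mathbb{R}}$; the interior condition forces all $\alpha_i$ nonzero, and since $h$ vanishes at $v_p,v_q$ one has $\alpha_p=f(v_p),\alpha_q=f(v_q)$ of opposite signs by antiparallelism. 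Writing $J^-:=\{i:h(v_i)<0\}$, on the arc $J^+\cup\{p,q\}$ the coordinates read $\alpha_i=(h+f)(v_i)$, while on $J^-\cup\{p,q\}$ they read $\alpha_i=f(v_i)$. Applying Lemma~\ref{signch} to each arc (using that both reach $v_p,v_q$ with opposite signs across an angle of $\pi$) yields exactly one sign change per arc, so the cyclic sign sequence $(\mathrm{sgn}\,\alpha_i)_i$ has exactly two sign changes. Then $I=\{i:\alpha_i>0\}$ is a single proper cyclic arc, and no such set lies in $\Delta$ in dimension two---contradiction.

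For $(\Rightarrow)$, assume $\mathcal{L}\notin\mathrm{int}(Z_I)$ for any $I\in\Delta$. Use Lemma~\ref{g} to write $\mathcal{L}=\sum_i g(v_i)E_i$ with a piecewise linear $g$ satisfying $g(v_1)=g(v_2)=0$, and replay the case analysis from the proof of Lemma~\ref{interior}. In each of its cases---$g(v_{k-1}),g(v_{l+1})$ of opposite signs; the same sign with some other $g(v_j)\le 0$; or Case~3 with $\theta\ne\pi$---the proof explicitly produces $I\in\Delta$ with $\mathcal{L}\in\mathrm{int}(Z_I)$, contradicting the hypothesis. So we must land in Case~3 with $\theta=\pi$: the endpoints $v_k,v_l$ of the maximal zero-arc $J$ of $g$ are antiparallel, $g\ge 0$ everywhere, $g\equiv 0$ on $J$, and $g>0$ on the complementary arc $J^+$. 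This already provides the collinear pair $v_p:=v_k,v_q:=v_l$ and yields $\mathcal{L}=\sum_{i\in J^+}g(v_i)E_i$.

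The main obstacle is then to replace the piecewise linear data $(g(v_i))_{i\in J^+}$ by the restriction of a single linear function. Let $h_0$ be the linear function with $h_0(v_k)=h_0(v_l)=0$ and $h_0>0$ on $J^+$, and set $r_i:=g(v_i)/h_0(v_i)$ for $i\in J^+$. The key claim is that these ratios are all equal. If not, pick $i,j\in J^+$ with $r_i<r_j$, choose $d$ strictly between them and distinct from every $r_m$, and consider $g':=g-d\,h_0$, which still represents $\mathcal{L}$. Then $g'(v_i)<0<g'(v_j)$, $g'(v_k)=g'(v_l)=0$, $g'(v_m)=-d\,h_0(v_m)$ is nonzero of constant sign on $J\setminus\{v_k,v_l\}$, and $g'(v_m)\ne 0$ for every $m\in J^+$. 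A small linear perturbation $\epsilon f'$ with $f'(v_k),f'(v_l)$ of opposite signs (constructed as in the proof of Lemma~\ref{interior}) lifts the two remaining zeros without affecting signs elsewhere, and cyclic sign-change counting in the spirit of Lemma~\ref{signch} shows the resulting $I_{>0}$ is cyclically disconnected, hence in $\Delta$, so $\mathcal{L}\in\mathrm{int}(Z_{I_{>0}})$---contradicting the hypothesis. Therefore the ratios share a common value $d>0$, and $h:=d\,h_0$ gives $\mathcal{L}=\sum_{i\in J^+}h(v_i)E_i$. The most delicate step is the cyclic sign-change bookkeeping in this final perturbation argument.
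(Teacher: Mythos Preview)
Your $(\Leftarrow)$ direction is correct and matches the paper's argument exactly: both reduce to the sign-change count from the first case of Proposition~\ref{->inf}, concluding that the candidate $I$ is a single proper arc and hence not in $\Delta$.

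For $(\Rightarrow)$, your setup through ``$\theta=\pi$'' also matches the paper, but from there the two arguments diverge. The paper does not introduce the ratios $r_i=g(v_i)/h_0(v_i)$; instead it simply \emph{re-applies} Lemma~\ref{interior} to the adjacent pair $(v_l,v_{l+1})$. Writing $\mathcal L=\sum_i g_1(v_i)E_i$ with $g_1(v_l)=g_1(v_{l+1})=0$ via Lemma~\ref{g}, one has $g_1=g-f$ for a linear $f$ with $f(v_l)=0$ and $f(v_{l+1})=g(v_{l+1})>0$; since $v_k$ is antipodal to $v_l$, also $f(v_k)=0$. The case analysis of Lemma~\ref{interior} now forces the zero arc of $g_1$ containing $v_l,v_{l+1}$ to have antipodal endpoints, and since one endpoint is $v_l$ the other must be $v_k$. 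Thus $g_1\equiv 0$ on all of $J^+\cup\{v_k,v_l\}$, i.e.\ $g=f$ there, and $h=f$ is the desired linear function. This is shorter and avoids any new perturbation bookkeeping.

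Your ratio argument reaches the same conclusion and is essentially correct, but the final step is underspecified. After forming $g'=g-dh_0$ you have $g'>0$ on $J\setminus\{v_k,v_l\}$, $g'(v_k)=g'(v_l)=0$, and both signs on $J^+$. The perturbation $\epsilon f'$ gives $v_k,v_l$ opposite signs, but \emph{which} sign goes to which endpoint matters: if the $J^+$ sign pattern (reading from $v_{l+1}$ to $v_{k-1}$) happens to be ``positives then negatives'', then the choice $\tilde g(v_l)>0,\ \tilde g(v_k)<0$ produces a \emph{connected} $I_{>0}$, while the opposite choice produces a disconnected one; the situation is reversed for ``negatives then positives''. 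One of the two choices always works, but you should say so and indicate why (e.g.\ the wrong choice makes the complement $I_{<0}$ a single arc through the appropriate endpoint, which is incompatible with the other endpoint being isolated or with a second negative run in $J^+$). Invoking Lemma~\ref{signch} here is not quite apt, since $g'$ is only piecewise linear on $J^+$; the argument is really a direct case check on the arc structure of $I_{<0}$.
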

\begin{proof}
Let $\mathcal{L}$ be an non-zero point in $V$ which is not contained in the interior of $Z_I$ for any $I\in \Delta$. Arbitrarily picking an element $j\in\{1,2\ldots,n\}$, by Lemma \ref{g}, we assume $\mathcal{L}=\sum_{i}g(v_i)E_i$, where $g$ is a $\mathbf{\Sigma}-$piece-wise linear function on $\mathbb{R}^2$ such that $g(v_j)=g(v_{j+1})=0$. Let $I_0=\{i|g(v_i)=0\}$. Since $\mathcal{L}$ is nonzero, we have $I_0\neq \{1,2,\ldots,n\}$. Let $J$ be the connected component of $I_0$ containing $v_j$ and $v_{j+1}$. We write the points in $J$ in clockwise order as $v_k,\ldots,v_j,v_{j+1},\ldots,v_l$. Applying the argument of Lemma \ref{interior}, we conclude that the angle $\theta$ between $v_k$ and $v_l$ must be $\pi$. Applying the same argument to $v_l$ and $v_{l+1}$, we get that $g$ is linear on $v_l, v_{l+1},\ldots, v_k$. If $g(v_l)>0$, we take $h=g$. If $g(v_l)<0$, we take $h=-g$. Then $\mathcal{L}=\sum_{i\in J^+}h(v_i)E_i$ in $V$, where $h(v_k)=h(v_l)=0$ and $J^+=\{i\in\{1,\ldots,n\}| h(v_i)>0\}$.

Now we consider the other direction. For any linear function $f\in (\mathbb{R}^2)^*$, let $\sum_{i}r_iE_i=\sum_{i\in I_{+}}h(v_i)E_i+\sum_{i=1}^{n}f(v_i)E_i$. We know  $\mathcal{L}$ is contained in the interior of $Z_I$ for some $I\in\Delta$ if and only if there exists a $f\in (\mathbb{R}^2)^*$ such that $r_i\neq 0$ for all $i\in\{1,\ldots,n\}$ and $I=\{i|r_i>0\}$. For any $f\in (\mathbb{R}^2)^*$, if $f(v_p)\neq 0$, we get that there are exactly two sign changes in $(r_i)_{i=1}^{n}$ using the same argument as the first case in the proof of Proposition \ref{->inf}. Thus if $r_i\neq 0$ for all $i\in\{1,\ldots,n\}$, we know $\{i\in\{1,\ldots,n\}|r_i>0\}$ is connected. If $f(v_p)=0$, we have $r_p=0$. Thus $\mathcal{L}$ is not contained in the interior of $Z_I$ for any $I\in\Delta$.
\end{proof}

For now, let us assume $v_p$ and $v_q$ are collinear and there are no other collinear pairs. We take a linear function $g$ such that $g(v_p)=g(v_q)=0$. Let $S$ be the unit sphere in $V$ with center origin, $P^+=\frac{\sum_{i\in J^+}g(v_i)E_i}{||\sum_{i\in J^+}g(v_i)E_i||}$ and $P^-=-P^+$. By Lemma \ref{notin}, we know that $P^+$ and $P^-$ are the only two points on $S$ that are not contained in the interior of $Z_I$ for any $I\in \Delta$.

For each $I\in \Delta$, Let $h_{I,i}$, $h_I$ and $\mathcal{E}$ be the functions defined in Definition \ref{h_I}.
If $P^+\in Z_I$, we know $P^+$ is on the boundary of $Z_I$. Let $\{F_{I,1},\ldots,F_{I,k}\}$ be all the faces of $Z_I$ that containing $P^+$.
Let $C^+$ be an open cone with center line $\mathbb{R}_{\geq 0}P^+$. Also, we can choose a sufficiently small $C^+$ which satisfies the following:
\begin{itemize}
\item $(1)$   $C^+ \cap \{Z_I| P^+\notin Z_I\}=\emptyset$, which implies if $P^+\notin Z_I$, we have $h_I(v)=0$ for any $v\in C^+$;
\item $(2)$   when $P^+\in Z_I$, we have $h_I(v)=\min\limits_{i\in\{1,\ldots,k\}}h_{I,i}(v)$.
\end{itemize}

Let $U$ be the orthogonal complement of $\mathbb{R}P^+$ in linear space $V$ and $$pr: V \rightarrow U$$ be the orthogonal projection. Let $\mathcal{E}$ be the function defined in Definition \ref{h_I}.
\begin{lemma}
There exists a positive constant $A$ such that $$\mathcal{E}(v)\geq A \, \mathrm{dist} (v, P^+)$$ for all points $v\in C^+$.
\end{lemma}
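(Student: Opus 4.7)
Both sides of the desired inequality are positively homogeneous of degree one in $v$ (interpreting $\mathrm{dist}(v,P^+)$ as the orthogonal distance $\|pr(v)\|$ from $v$ to the line $\mathbb{R}P^+$), so the problem reduces to producing a uniform positive lower bound for the ratio $\mathcal{E}(v)/\|pr(v)\|$ on a compact slice of $C^+$. The plan is to linearize $\mathcal{E}$ near $P^+$, observe that the local model factors through the projection $pr$, and then conclude via compactness of the unit sphere of $U$.

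First I will simplify $\mathcal{E}|_{C^+}$ using the two defining properties of $C^+$. By Lemma \ref{notin}, $P^+$ is never in the interior of any $Z_I$, so whenever $P^+\in Z_I$ it lies on the boundary; following the setup, let $F_{I,1},\ldots,F_{I,k}$ (with $k=k(I)$) denote the faces of $Z_I$ passing through $P^+$. For $v\in C^+$, condition (2) gives $h_I(v)=\min_{1\le i\le k}h_{I,i}(v)$ when $P^+\in Z_I$, while condition (1) ensures that any $I$ with $P^+\notin Z_I$ contributes a non-positive value to the outer maximum defining $\mathcal{E}$. The key point is that each linear functional $h_{I,i}$ occurring in the first case vanishes at $P^+$, hence on the whole line $\mathbb{R}P^+$, so $h_{I,i}(v)=h_{I,i}(pr(v))$. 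This motivates introducing
$$\Phi(u):=\max_{I:\,P^+\in Z_I}\;\min_{1\le i\le k(I)}h_{I,i}(u),\qquad u\in U,$$
which is continuous, piecewise linear, and positively homogeneous of degree one; evaluating $h_{I^*}(v)$ at an $I^*$ attaining the outer maximum in $\Phi(pr(v))$ yields the pointwise estimate $\mathcal{E}(v)\ge\Phi(pr(v))$ for every $v\in C^+$.

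The main step is to show $\Phi(u)>0$ for every nonzero $u\in U$. Fix such $u$ and take $\lambda>0$ small enough that $v_\lambda:=P^+ +\lambda u\in C^+$. Since $u\perp P^+$ is nonzero, $v_\lambda$ is not a positive multiple of $P^+$ or of $P^-$, so by Lemma \ref{notin} it lies in the interior of some $Z_I$ and $\mathcal{E}(v_\lambda)>0$. On the other hand, the computation above gives $\mathcal{E}(v_\lambda)\le\max(0,\lambda\Phi(u))$, so the positivity just obtained forces $\Phi(u)>0$. Since $\Phi$ is continuous and strictly positive on the compact unit sphere of $U$, it attains a positive minimum $A:=\min_{\|u\|=1}\Phi(u)>0$; homogeneity then upgrades this to $\Phi(u)\ge A\|u\|$ throughout $U$, and combining with $\mathcal{E}(v)\ge\Phi(pr(v))$ delivers $\mathcal{E}(v)\ge A\,\|pr(v)\|$ on all of $C^+$. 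The step I expect to require the most care is this strict positivity of $\Phi$, which couples Lemma \ref{notin} with both defining properties of $C^+$.
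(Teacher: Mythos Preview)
Your proof is correct and follows essentially the same route as the paper: introduce the auxiliary function on $U$ (your $\Phi$, the paper's $\mathcal{E}_U$), observe that on $C^+$ the relevant functionals $h_{I,i}$ factor through $pr$, deduce positivity on $U\setminus\{0\}$ from Lemma~\ref{notin}, and conclude by compactness of the unit sphere in $U$. The only cosmetic difference is that the paper asserts the equality $\mathcal{E}(v)=\mathcal{E}_U(pr(v))$ on $C^+$, whereas you split this into the two one-sided bounds $\mathcal{E}(v)\ge\Phi(pr(v))$ and $\mathcal{E}(v)\le\max(0,\Phi(pr(v)))$ and use each where needed; the content is the same.
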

\begin{proof}
 We assume $Z_{I_1},\ldots, Z_{I_m}$ are all the cones containing $P^+$. By $(1)$, we have $\mathcal{E}(v)=\max\limits_{i\in\{1,\ldots,m\}}h_{I_i}(v)$ for $v\in C^+$. For each $i\in\{1,\ldots,m\}$, let $F_{I_i,1},\ldots,F_{I_i,k_i}$ be all the faces of $Z_{I_i}$ containing $P^+$. By $(2)$, we have $h_{I_i}(v)=\min\limits_{j\in\{1,\ldots,k_i\}}h_{I_i,j}(v)$. We can assume $v=aP^+ + pr(v)$ for some $a\in \mathbb{R}_{\geq0}$. Then we have $h_{I_i,j}(v)=h_{I_i,j}(pr(v))$.

 Define a function on $U$ by $$\mathcal{E}_U(u)=\max\limits_{I}\{\min\limits_{j\in \{1,\ldots,k_{I}\}}h_{I,j}(u)\}$$ for $u\in U$. For any $v\in C^+$, we have $\mathcal{E}(v)=\mathcal{E}_U(pr(v))$. We know $\mathcal{E}_U$ is continuous on $U$ and $\mathcal{E}_{U}(ku)=k\mathcal{E}_{U}(u)$ for $k>0$. By Lemma \ref{notin}, we know that $P^+$ and $P^-$ are the only two points on $S$ that are not contained in the interior of $Z_I$ for any $I$. So we have $\mathcal{E}$ takes positive value on $V \backslash \{P^+, P^-\}$. Then $\mathcal{E}_U$ takes positive value on $U\backslash\{0\}$ since $pr(v)\neq 0$ implies $v \notin \{P^+, P^-\}$. So there is a positive constance $A$ such that $\mathcal{E}_{U}(u)\geq A$ for any $u$ on the unit sphere in $U$. For any $u\in U\backslash\{0\}$, we have $\mathcal{E}_{U}(u)=|u|\mathcal{E}_{U}(\frac{u}{|u|})$. Thus $\mathcal{E}_{U}(u)\geq A|u|$ for $u \in U\backslash \{0\}$. This implies for any $v\in C^+ \backslash P^+$, we have $\mathcal{E}(v)\geq A |pr(v)|$. Since $|pr(v)|=\text{ dist} (v, P^+)$, we get the result.
\end{proof}

\begin{remark}
In the similar way, we have a positive constant $A'$ such that $\mathcal{E}(v)\geq A' \, \text{dist} (v, P^-)$ for all $v\in C^-$, where $C^-$ be an open cone with center line $\mathbb{R}_{\geq 0}P^-$ which is sufficiently small.
\end{remark}

\begin{proposition}\label{S}
Let $S$ be the unit sphere in $V$ and $W=S\backslash ((C^+\cap S)\sqcup (C^-\cap S))$.
There exist two positive constants $\lambda$ and $\epsilon$ such that $\mathcal{E}(v)\geq \epsilon$ for all $v \in W$ and $$\mathcal{E}(v)\geq \min \{\epsilon, \lambda \, \mathrm{dist} (v, P^+), \lambda \, \mathrm{dist} (v, P^-)\}$$ for all $v\in S$.
\end{proposition}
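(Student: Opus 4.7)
The plan is to combine a compactness argument on $W$ with the two cone estimates already established for $C^+$ and $C^-$. First I note that $C^+\cap S$ and $C^-\cap S$ are open subsets of $S$ (since $C^+$ and $C^-$ are open cones), hence $W$ is a closed subset of the compact sphere $S$ and is itself compact. Moreover, since $P^+$ lies on the center line of $C^+$ (and $P^-$ on that of $C^-$), both points belong to $(C^+\cap S)\sqcup(C^-\cap S)$, so $P^\pm\notin W$.

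Next I observe that $\mathcal{E}$ is continuous on $V$ (as the max of finitely many mins of linear functions). By Lemma \ref{notin}, $P^+$ and $P^-$ are the only points of $S$ not contained in the interior of some $Z_I$; hence for every $v\in S\setminus\{P^+,P^-\}$ some $I\in\Delta$ with $v\in\mathrm{int}\,Z_I$ gives $h_{I,i}(v)>0$ for every face, so $h_I(v)>0$ and therefore $\mathcal{E}(v)>0$. In particular $\mathcal{E}$ is strictly positive on the compact set $W$, and attains a strictly positive minimum there; call it $\epsilon$. This establishes the first inequality $\mathcal{E}(v)\geq\epsilon$ for all $v\in W$.

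For the second inequality I set $\lambda=\min(A,A')$, where $A$ and $A'$ are the constants from the preceding lemma and remark. Given $v\in S$, consider three cases. If $v\in W$, then $\mathcal{E}(v)\geq\epsilon\geq\min\{\epsilon,\lambda\,\mathrm{dist}(v,P^+),\lambda\,\mathrm{dist}(v,P^-)\}$. If $v\in C^+\cap S$, the lemma gives $\mathcal{E}(v)\geq A\,\mathrm{dist}(v,P^+)\geq\lambda\,\mathrm{dist}(v,P^+)$, which again dominates the minimum on the right. The case $v\in C^-\cap S$ is symmetric. Since these three cases cover $S$, the desired bound holds on all of $S$.

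The argument is largely formal once one has the two cone estimates, so no step is genuinely hard; the only point deserving care is verifying that $W$ is compact and excludes $P^\pm$ so that continuity of $\mathcal{E}$ combined with its pointwise positivity off $\{P^+,P^-\}$ actually yields a uniform lower bound on $W$. Once that is in place, choosing $\lambda=\min(A,A')$ patches the three regions together.
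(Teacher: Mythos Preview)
Your proof is correct and follows essentially the same approach as the paper: compactness of $W$ gives the uniform lower bound $\epsilon$, and setting $\lambda=\min(A,A')$ patches together the cone estimates from the preceding lemma and remark with the bound on $W$ via the three-case split. Your version is in fact a bit more careful than the paper's, explicitly checking that $P^\pm\notin W$ and that $\mathcal{E}$ is strictly positive on $W$ before invoking compactness.
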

\begin{proof}
We have $S=(C^+\cap S)\sqcup (C^-\cap S)\sqcup W$. Since $(C^+\cap S)$ and $(C^-\cap S)$ are open in $S$, we get $W$ is closed. Thus $W$ is compact. So there is a positive constance $\epsilon$ such that $\mathcal{E}(v)\geq \epsilon$ for $v\in W$. Thus we have $\mathcal{E}(v)\geq \min \{\epsilon, A \, \text{dist} (v, P^+), A' \, \text{dist} (v, P^-)\}$ for all $v\in S \backslash \{P^+, P^-\}$. Let $\lambda=\min\{A,A'\}$, we get the conclusion.
\end{proof}
Let $a=\max\limits_{I, \{h_{I,i}\}_{i=1}^{k_I}}\{h_{I,i}(r_I)\}$. Let $B$ be a ball in $V$ with radium $\alpha$ and center origin. Also denote $T$ be a tube with center line $\mathbb{R}P^+$ and radium $\beta$. We have the following theorem.

\begin{theorem}\label{BT}
Let $\alpha>\frac{a}{\epsilon}$, $\beta>\frac{a}{\lambda}$ and $\mathcal{L}$ is an element in $\mathrm{Pic}(\mathbb{P}_{\mathbf{\Sigma}})$. Then we have $\mathcal{L}$ is not $H-$trivial if $\mathcal{L}$ is outside of $B \cup T$. See Figure \ref{fig:3}.
\end{theorem}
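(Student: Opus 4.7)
The plan is to reduce $H$-nontriviality of $\mathcal{L}$ to showing that $\mathcal{L}$ lies in $r_I + Z_I$ for some $I\in\Delta$, and then to pin that down via the linear functionals $h_{I,i}$. Concretely, by the choice of $r_I$ in Proposition \ref{r_I} we have $r_I+(Z_I\cap \mathbb{Z}^k)\subseteq FS_I$; since $\mathcal{L}\in \mathrm{Pic}(\mathbb{P}_{\mathbf{\Sigma}})$ is a lattice point, it suffices to prove $\mathcal{L}-r_I\in Z_I$ for some $I$. Because $Z_I=\{v\mid h_{I,i}(v)\ge 0,\ i=1,\dots,k_I\}$, this amounts to showing $h_{I,i}(\mathcal{L})\ge h_{I,i}(r_I)$ for every $i$, and by the definition of $a=\max_{I,i}h_{I,i}(r_I)$ it is enough to produce some $I\in\Delta$ with $h_I(\mathcal{L})\ge a$, i.e.\ $\mathcal{E}(\mathcal{L})\ge a$. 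Once this is done, Proposition \ref{Htrivial} yields that $\mathcal{L}$ is not $\mathrm{H}$-trivial.

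The next step is to translate the hypothesis ``$\mathcal{L}\notin B\cup T$'' into a lower bound on $\mathcal{E}(\mathcal{L})$. I will use that each $h_{I,i}$, hence $h_I$ and $\mathcal{E}$, is positively homogeneous of degree one, so
\[
\mathcal{E}(\mathcal{L})=|\mathcal{L}|\,\mathcal{E}\!\left(\tfrac{\mathcal{L}}{|\mathcal{L}|}\right).
\]
Applying Proposition \ref{S} to $v=\mathcal{L}/|\mathcal{L}|\in S$ gives
\[
\mathcal{E}(\mathcal{L})\ \ge\ \min\bigl\{\,|\mathcal{L}|\,\epsilon,\ \lambda\,|\mathcal{L}|\,\mathrm{dist}(v,P^+),\ \lambda\,|\mathcal{L}|\,\mathrm{dist}(v,P^-)\,\bigr\}.
\]
Since (as used in the proof of the preceding lemma) $\mathrm{dist}(v,P^\pm)=|pr(v)|$, where $pr$ is orthogonal projection onto the orthogonal complement of $\mathbb{R}P^+=\mathbb{R}P^-$, the product $|\mathcal{L}|\,\mathrm{dist}(v,P^\pm)$ is precisely the Euclidean distance from $\mathcal{L}$ to the line $\mathbb{R}P^+$, which is the axis of the tube $T$.

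Now I assemble the estimate. If $\mathcal{L}\notin B$ then $|\mathcal{L}|>\alpha>a/\epsilon$, whence $|\mathcal{L}|\epsilon>a$. If in addition $\mathcal{L}\notin T$ then the distance from $\mathcal{L}$ to $\mathbb{R}P^+$ exceeds $\beta>a/\lambda$, so $\lambda\,\mathrm{dist}(\mathcal{L},\mathbb{R}P^+)>a$. Both arguments of the min above therefore exceed $a$, so $\mathcal{E}(\mathcal{L})>a$. By definition of $\mathcal{E}$ there exists $I\in\Delta$ with $h_I(\mathcal{L})>a\ge h_{I,i}(r_I)$ for all $i$, which gives $\mathcal{L}-r_I\in Z_I$ and completes the proof as outlined in the first paragraph.

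The main subtlety — and the only place one has to be careful — is the interpretation of $\mathrm{dist}(v,P^\pm)$ in Proposition \ref{S}. Under the natural reading used earlier in the section, this is the distance to the line $\mathbb{R}P^\pm$ (equivalently $|pr(v)|$), which is exactly what is needed so that, after multiplying by $|\mathcal{L}|$, one recovers the tube-radius condition ``$\mathcal{L}\notin T$''. Apart from this bookkeeping, the argument is a direct combination of Proposition \ref{S} (lower bound on $\mathcal{E}$ on the sphere) with the homogeneity of $\mathcal{E}$ and the definitions of $\alpha,\beta,a$.
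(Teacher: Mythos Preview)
Your proof is correct and follows essentially the same approach as the paper's: both show that $\mathcal{L}\notin B\cup T$ forces $\mathcal{E}(\mathcal{L})>a$, hence $\mathcal{L}\in r_I+Z_I\subseteq FS_I$ for some $I\in\Delta$. The only difference is organizational: the paper argues by contradiction and splits into the cases $\mathcal{L}/|\mathcal{L}|\in W$ versus $\mathcal{L}\in C^\pm$, whereas you argue directly using the unified $\min$-bound of Proposition~\ref{S} together with the positive homogeneity of $\mathcal{E}$, which yields a slightly more streamlined presentation; your remark that $\mathrm{dist}(v,P^\pm)$ must be read as $|pr(v)|$ (distance to the line $\mathbb{R}P^+$) is exactly the interpretation the paper uses, and is what makes the scaling step work.
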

\begin{figure}[H]
  \includegraphics[width=0.4\textwidth]{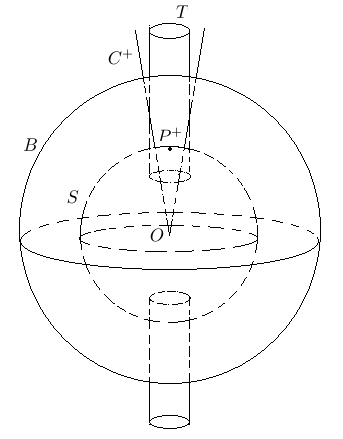}
  \caption{}
  \label{fig:3}
\end{figure}

\begin{proof}
Let $\mathcal{L}\notin B \cup T$, we claim $\mathcal{L}$ is $H-$trivial. Otherwise, $\mathcal{L}\notin r_I+Z_I$ for all $I$. Then there exists $h_{I,i}$ such that $h_{I,i}(\mathcal{L})<h_{I,i}(r_I)\leq a$. By the definition of $h_I$, we have $h_I(\mathcal{L})<a$ for all $I$. So we get $\mathcal{E}(\mathcal{L})<a$. If $\mathcal{L}\notin C^+ \cup C^-$, we have $\frac{\mathcal{L}}{|\mathcal{L}|} \in W$. Also, we know $\mathcal{E}(\frac{\mathcal{L}}{|\mathcal{L}|})<\frac{a}{|\mathcal{L}|}$. Since $\mathcal{L}\notin B$, we have $|\mathcal{L}|>\alpha>\frac{a}{\epsilon}$. So $\mathcal{E}(\frac{\mathcal{L}}{|\mathcal{L}|})<\epsilon$, which contradicts to Proposition \ref{S}. If $\mathcal{L}\in C^+$, we have $\mathcal{E}(\mathcal{L})\geq \lambda \, \text{dist} (v, P^+)$ by Proposition \ref{S}. Thus $\text{dist} (v, P^+)< \frac{a}{\lambda}<\beta$, which implies $\mathcal{L} \in T$. Thus we get contradiction.  The case that $\mathcal{L}\in C^-$ is similar.
\end{proof}

We give a description of $H-$trivial line bundles inside the tube by the following proposition.

\begin{proposition}\label{line}
Let $D_1$ and $D_2$ be two elements in $\mathrm{Pic}(\mathbb{P}_{\mathbf{\Sigma}})$. If $D_1+lD_2$ is $\mathrm{H}-$trivial for all $l\in\mathbb{Z}_{\geq0}$, then $D_1+lD_2$ is $\mathrm{H}-$trivial for all $l\in\mathbb{Z}$.
\end{proposition}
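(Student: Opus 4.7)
The strategy is to show that for each $I \in \Delta$, the set $T_I := \{l \in \mathbb{Z} : D_1 + lD_2 \in FS_I\}$ is empty, combining three ingredients: an interval property for $T_I$, the placement of $D_2$ in the tube direction, and Serre duality. For the first ingredient, given $l_1 < l_2$ in $T_I$ with witnesses $f_1, f_2 \in N^*$ satisfying the sign conditions defining $FS_I$, the rational combination $f_l^\mathbb{Q} = \frac{l_2 - l}{l_2 - l_1} f_1 + \frac{l - l_1}{l_2 - l_1} f_2$ yields a real representative of $D_1 + lD_2$ whose entries $r_i$ are convex combinations of the integer values at $l_1$ and $l_2$, hence still satisfy $r_i \geq 0$ for $i \in I$ and $r_i \leq -1$ for $i \notin I$ in the closed real sense. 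The unit-sized gap between these half-lines allows one to round $f_l^\mathbb{Q}$ to an integer element of $N^*$ still witnessing $l \in T_I$, so $T_I$ is an integer interval.

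For the second ingredient, because the hypothesis furnishes infinitely many $H$-trivial line bundles $D_1 + lD_2$ for $l \geq 0$, Theorem \ref{BT} places the ray inside the tube for large $l$, forcing $D_2$ to be parallel to $P^+$ in $\mathrm{Pic}_\mathbb{R}$. By Lemma \ref{notin}, $D_2$ lies in no interior of any $Z_I$, and the explicit form $P^+ \propto \sum_{i \in J^+} g(v_i) E_i$ combined with the relation $\sum_i g(v_i) E_i = 0$ in $\mathrm{Pic}$ lets one produce, for each $I \in \Delta$, an integer representative of $D_2$ either satisfying the $Z_I$ sign pattern ($b_i \geq 0$ for $i \in I$, $b_i \leq 0$ for $i \notin I$) or the opposite $-Z_I$ pattern.

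The third ingredient is a case analysis on which of these patterns holds. If $D_2$ admits a $Z_I$-representative, then for any $l_0 \in T_I$ adding integer multiples of $(b_i)$ to a witness produces witnesses for all $l \geq l_0$, so $T_I$ is upward closed; combined with the hypothesis $T_I \cap \mathbb{Z}_{\geq 0} = \emptyset$ this forces $T_I = \emptyset$. Otherwise $D_2$ has a $-Z_I$-representative and the symmetric argument makes $T_I$ downward closed, hence unbounded below if non-empty. Applying Serre duality through the identity $T_I(D_1, D_2) = -T_{I^c}(K - D_1, D_2)$, together with the Serre-transformed hypothesis that $K - D_1 + lD_2$ is $H$-trivial for $l \leq 0$, gives the symmetric upward-closed situation for $T_{I^c}(K - D_1, D_2)$; applying the upward-closed case at the base point $K - D_1$ forces this set to be empty, and tracing back yields $T_I(D_1, D_2) = \emptyset$.

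The main obstacle I anticipate lies in the second ingredient: verifying that integer representatives of $D_2$ with the required sign pattern exist for every $I \in \Delta$. This combinatorial claim should reduce to a careful case analysis on how $I$ partitions into $I \cap J^+$, $I \cap J^-$, and $I \cap J^0$, together with manipulation of the $N^*$-relations to realize the desired signs, but the bookkeeping is the crux of the argument and will require the most care.
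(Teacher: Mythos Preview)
Your third ingredient does not close. Take an $I\in\Delta$ with $D_2\in Z_{I^c}$ but $D_2\notin Z_I$; this actually occurs, e.g.\ in the five-ray fan $v_1=(1,0)$, $v_2=(1,1)$, $v_3=(-1,1)$, $v_4=(-1,0)$, $v_5=(0,-1)$ with $D_2=E_2+E_3\sim E_5$ and $I=\{1,3\}$. Serre duality gives $T_I(D_1,D_2)=-T_{I^c}(K-D_1,D_2)$, and $D_2\in Z_{I^c}$ makes $T_{I^c}(K-D_1,D_2)$ upward closed. But the Serre-transformed hypothesis only says $(K-D_1)+lD_2$ is $\mathrm H$-trivial for $l\le 0$, i.e.\ $T_{I^c}(K-D_1,D_2)\cap\mathbb Z_{\le 0}=\emptyset$; combined with upward closedness this does \emph{not} force emptiness. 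If instead you run the upward-closed case on the pair $(K-D_1,-D_2)$, for which the hypothesis genuinely holds and $-D_2\in Z_I$, you conclude $T_I(K-D_1,-D_2)=\emptyset$; but unwinding, this set equals $T_{I^c}(D_1,D_2)$, not $T_I(D_1,D_2)$. In other words, your machinery only ever proves $T_J(D_1,D_2)=\emptyset$ for those $J$ with $D_2\in Z_J$, and Serre duality just reproduces that same statement; it never reaches the $J$ with $D_2\notin Z_J$. (Incidentally, your interval property from the first ingredient is both unjustified---the rounding of $f_l^{\mathbb Q}$ to $N^*$ need not preserve the inequalities, since the ``unit gap'' lives in $\mathbb Z^n$ and does not descend to the two-dimensional $N^*$---and unused, since upward/downward closedness is all you invoke.)

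The paper's proof is entirely different and avoids analysing the $FS_I$ individually. From the same observation you make, some multiple $mD_2$ is linearly equivalent to two effective divisors $D_3=\sum_{i\in J^+}mh(v_i)E_i$ and $D_4=\sum_{i\in J^-}(-mh(v_i))E_i$ with \emph{disjoint} supports. The Koszul complex for the pair $(D_3,D_4)$ then collapses to a short exact sequence
\[
0\longrightarrow\mathcal O(-2mD_2)\longrightarrow\mathcal O(-mD_2)^{\oplus 2}\longrightarrow\mathcal O\longrightarrow 0.
\]
Letting $l_0$ be the largest negative integer with $D_1+l_0D_2$ not $\mathrm H$-trivial, one tensors by $\mathcal O(D_1+(l_0+2m)D_2)$ and reads off from the long exact sequence that $D_1+l_0D_2$ is $\mathrm H$-trivial after all, since both $D_1+(l_0+m)D_2$ and $D_1+(l_0+2m)D_2$ are. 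This recursion, linking cohomology at $l_0$ to that at $l_0+m$ and $l_0+2m$, is exactly the mechanism your cone-by-cone analysis lacks.
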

\begin{proof}
We know that $D_2$ does not lie in the interior of $Z_I$ for any $I\in \Delta$. The reason is that if $D_2$ lies in the interior of $Z_I$ for some $I\in \Delta$, we can find a sufficiently large $l\in\mathbb{Z}_{\geq0}$ such that $D_1+lD_2\in FS_I$. By Lemma \ref{notin}, we have $D_2=\sum_{i\in J^+}h(v_i)E_i$ in $\mathrm{Pic}_{\mathbb{R}}(\mathbb{P}_{\mathbf{\Sigma}})$ for a linear function $h$ on $(\mathbb{R}^{2})^*$, where $h(v_p)=h(v_q)=0$ and $J^+=\{i\in\{1,\ldots,n\}| h(v_i)>0\}$. That is to say there exists $f\in(\mathbb{R}^{2})^*$ such that $D_2+\sum_{i\in \{1,\ldots,n\}}f(v_i)E_i=\sum_{i\in J^+}h(v_i)E_i$. We can take a sufficiently large positive integer $m$ such that $mf(v_i)\in \mathbb{Z}$ and $m h(v_i)\in \mathbb{Z}$ for all $i\in \{1,\ldots,n\}$. Now we have $mD_2+\sum_{i\in \{1,\ldots,n\}}mf(v_i)E_i=\sum_{i\in J^+}mh(v_i)E_i$ and $mf\in N^*$. This implies $mD_2$ is equivalent to $\sum_{i\in J^+}mh(v_i)E_i$ by Proposition \ref{pic}. i.e. $mD_2=\sum_{i\in J^+}mh(v_i)E_i$ in $\mathrm{Pic}(\mathbb{P}_{\mathbf{\Sigma}})$. We denote $\sum_{i\in J^+}mh(v_i)E_i$ by $D_3$. Also we have $mh\in N^*$. Since $\sum_{i\in \{1,\ldots,n\} \backslash J^+}(-mh(v_i))E_i=D_3+\sum_{i\in \{1,\ldots,n\}}(-mh(v_i))E_i$, we have $D_3$ is equivalent to $\sum_{i\in \{1,\ldots,n\} \backslash J^+}(-mh(v_i))E_i$ which we denote by $D_4$. Note $-mh(v_i)>0$ for $i\in \{1,\ldots,n\} \backslash (J^+\cup \{p,q\})$. We get both $D_3$ and $D_4$ are effective. So we have the Koszul Complex
$$0\rightarrow \mathcal{O}(-D_3-D_4)\rightarrow \mathcal{O}(-D_3)\oplus\mathcal{O}(-D_4)\rightarrow \mathcal{O}\rightarrow \mathcal{O}_{D_3\cap D_4}\rightarrow 0.$$
Since the support of $D_3$ is $\cup_{i \in J^+} E_i$, the support of $D_4$ is $\cup_{i\in \{1,\ldots,n\} \backslash (J^+\cup \{p,q\})} E_i$ and the intersection of this two sets is empty, we have $\mathcal{O}_{D_3\cap D_4}$ is trivial. Thus we obtain $$0\rightarrow \mathcal{O}(-D_3-D_4)\rightarrow \mathcal{O}(-D_3)\oplus\mathcal{O}(-D_4)\rightarrow \mathcal{O}\rightarrow 0.$$ Also since both $D_3$ and $D_4$ are equivalent to $mD_2$, we get
\begin{equation}\label{exact}
0\rightarrow \mathcal{O}(-2mD_2)\rightarrow \mathcal{O}(-mD_2)\oplus\mathcal{O}(-mD_2)\rightarrow \mathcal{O}\rightarrow 0.
\end{equation}
Suppose there is a $l<0$ such that $D_1+lD_2$ is not $\mathrm{H}-$trivial. Let $l_0$ be the maximal one in the set $\{l\in \mathbb{Z}_{<0}|D_1+lD_2 \text{ is not } \mathrm{H}-\mathrm{trivial}\}$.
 We tensor the sequence \eqref{exact} by $\mathcal{O}(D_1+(l_0+2m)D_2)$ and get
$$0\rightarrow \mathcal{O}(D_1+l_0D_2)\rightarrow (\mathcal{O}(D_1+(l_0+m)D_2))^{\oplus 2}\rightarrow\mathcal{O}(D_1+(l_0+2m)D_2)\rightarrow 0.$$ Since the choice of $l_0$, we have both $\mathcal{O}(D_1+(l_0+m)D_2)$ and $\mathcal{O}(D_1+(l_0+2m)D_2)$ are $\mathrm{H}-$trivial. Thus $\mathcal{O}(D_1+l_0D_2)$ is $\mathrm{H}-$trivial, which leads to contradiction.
\end{proof}

\begin{remark}\label{line1}
All $\mathrm{H}-$trivial line bundles are within $B \cup T$. Let $O$ be the origin of $V$. Pick non-zero lattice point $Q \in T$. Let $H$ and $H'$ be two hyperplanes which are perpendicular to the vector $\overrightarrow{OQ}$ and pass through $Q$ and $O$ respectively. Then we get a cylinder by cutting the tube $T$ by $H$ and $H'$ which we denote by $Y$. We know every lattice point in $T$ equals a point in $Y$ plus $cQ$ for some $c\in \mathbb{Z}$. Since there are finitely many lattice points in $Y$, all the lattice points in $T$ are on finitely many parallel lines inside $T$.
\end{remark}
\begin{example}
Let $\Sigma$ be a complete simplicial fan in $N=\mathbb{Z}^2$, see $(1)$ of Figure \ref{fig:4}. Let $v_1=(1, 1), v_2=(0, 1), v_3=(-1, 0), v_4=(0, -1), v_5=(1, -1)$ to be the chosen lattice points in each of the one-dimensional cones of $\Sigma$. We consider the toric DM stack $\mathbb{P}_{\mathbf{\Sigma}}$ associated to this stacky fan $\mathbf{\Sigma}=(\Sigma, \{v_i\}_{i=1}^{n})$. By Theorem \ref{iff}, there are infinitely many $\mathrm{H}-$trivial line bundles on $\mathbb{P}_{\mathbf{\Sigma}}$.
\begin{figure}[H]
  \includegraphics[width=1.1\textwidth]{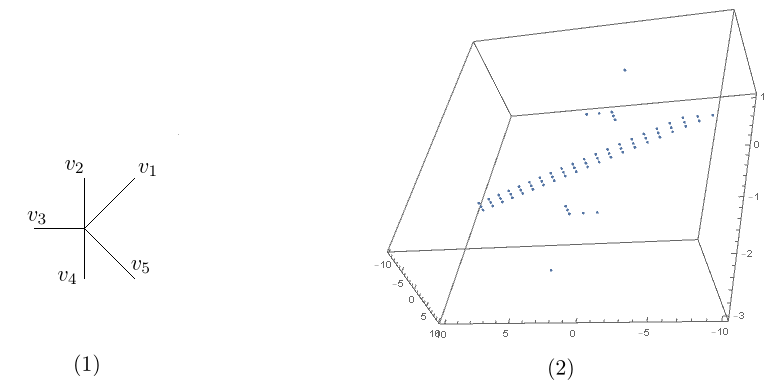}
  \caption{}
  \label{fig:4}
\end{figure}
The Picard group $\mathrm{Pic}(\mathbb{P}_{\mathbf{\Sigma}})$ is generated by $E_1, E_4$ and $E_5$. A point $aE_1+bE_4+cE_5$ in $\mathrm{Pic}(\mathbb{P}_{\mathbf{\Sigma}})$ is denoted by $(a,b,c)$.
We computed $\mathrm{H}-$trivial line bundles to be $\tau_1=\{(a,-1,a)|a\in \mathbb{Z}\}$, $\tau_2=\{(a,-1,a-1)|a\in \mathbb{Z}\}$ and $\tau_3=\{(a,-1,a-2)|a\in \mathbb{Z}\}$  which lie in three lines respectively, and a finite set as follows:
\begin{equation*}
\begin{split}
&(1,0,-1), (-1,0,-1), (-1,0,0), (-1,0,1), (-1,1,0), (0,0,-1),\\
&(-2,-2,-2),(-1,-2,-2),(0,-2,-2),(0,-3,-3),(0,-2,-3),(0,-2,-4)
\end{split}
\end{equation*}
See $(2)$ of Figure \ref{fig:4}. This is consistent with Theorem \ref{BT}, Proposition \ref{line} and Remark \ref{line1}.
\end{example}
If we have more than one pair of collinear vectors in $\{v_1,\ldots,v_n\}$, then the following theorem holds.
\begin{theorem}\label{BT1}
Let $\{v_{p_i},v_{q_i}\}_{i=1}^{l}$ be all collinear pairs in $\mathbf{\Sigma}$. There are $l$ tubes $\{T_1,\ldots, T_l\}$ with center line passing through $O$ and sufficiently large radius, and a ball $B$ with center $O$ and sufficiently large radium such that any line bundle outside of $B\bigcup (\cup_{i=1}^{l} T_i)$ is not $\mathrm{H}-$trivial.
\end{theorem}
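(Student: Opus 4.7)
The plan is to mimic the proof of Theorem \ref{BT} with one ``bad direction'' for each collinear pair instead of a single one. By Lemma \ref{notin}, the set of non-zero directions in $V=\mathrm{Pic}_\mathbb{R}(\mathbb{P}_{\mathbf{\Sigma}})$ not contained in the interior of any $Z_I$ is precisely the set of rays $\mathbb{R}_{\geq 0}\bigl(\sum_{i\in J^+}h(v_i)E_i\bigr)$, where $h\in(\mathbb{R}^2)^*$ vanishes on some collinear pair $\{v_{p_k},v_{q_k}\}$ and $J^+=\{i:h(v_i)>0\}$. For each collinear pair $\{v_{p_k},v_{q_k}\}$, $k=1,\ldots,l$, the associated $h_k$ is unique up to sign, so one obtains an antipodal pair $P^\pm_k\in S$, and these $2l$ points (possibly with repetitions across pairs) exhaust the points of $S$ missing from the interiors of the cones $Z_I$.

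Next, I would choose sufficiently small open cones $C^\pm_k$ with center lines $\mathbb{R}_{\geq 0}P^\pm_k$, pairwise disjoint, each satisfying conditions (1) and (2) preceding Proposition \ref{S}. The argument of the lemma just before Proposition \ref{S} then goes through verbatim at each $P^\pm_k$, yielding a positive constant $A^\pm_k$ with $\mathcal{E}(v)\geq A^\pm_k\,\mathrm{dist}(v,P^\pm_k)$ on $C^\pm_k$. Setting $W=S\setminus \bigsqcup_{k,\pm}(C^\pm_k\cap S)$, the set $W$ is compact and $\mathcal{E}$ is strictly positive on $W$ by the complete description above; hence there exists $\epsilon>0$ with $\mathcal{E}(v)\geq\epsilon$ for $v\in W$. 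Letting $\lambda=\min_{k,\pm}A^\pm_k$, one obtains the multi-pair analogue of Proposition \ref{S}:
\begin{equation*}
\mathcal{E}(v)\geq \min\bigl\{\epsilon,\ \lambda\,\mathrm{dist}(v,P^+_1),\ \lambda\,\mathrm{dist}(v,P^-_1),\ \ldots,\ \lambda\,\mathrm{dist}(v,P^+_l),\ \lambda\,\mathrm{dist}(v,P^-_l)\bigr\}
\end{equation*}
for all $v\in S$.

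Finally, with $a=\max_{I,i}h_{I,i}(r_I)$ as before, I would set $\alpha>a/\epsilon$ for the ball $B$ and $\beta_k>a/\lambda$ for the tube $T_k$ with center line $\mathbb{R}P^+_k=\mathbb{R}P^-_k$ (so each collinear pair contributes exactly one tube). The proof of Theorem \ref{BT} then applies almost verbatim: if $\mathcal{L}\notin B\cup\bigcup_k T_k$ is $\mathrm{H}$-trivial, then $\mathcal{L}\notin r_I+Z_I$ for every $I\in\Delta$, forcing $\mathcal{E}(\mathcal{L})<a$. If $\mathcal{L}/|\mathcal{L}|\in W$, then $|\mathcal{L}|>\alpha$ gives $\mathcal{E}(\mathcal{L}/|\mathcal{L}|)<a/|\mathcal{L}|<\epsilon$, a contradiction; if $\mathcal{L}/|\mathcal{L}|\in C^\pm_k$, then $\mathrm{dist}(\mathcal{L},P^\pm_k)<a/\lambda<\beta_k$ forces $\mathcal{L}\in T_k$, again a contradiction.

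The main obstacle is bookkeeping: one must verify that the various cones $C^\pm_k$ can be chosen simultaneously small enough that conditions (1)--(2) before Proposition \ref{S} hold at every $P^\pm_k$ while remaining pairwise disjoint. This is a finite problem since there are only finitely many collinear pairs and only finitely many $Z_I$'s, so it reduces to intersecting finitely many open conditions and shrinking cones if necessary; no genuinely new idea beyond the single-pair case is required.
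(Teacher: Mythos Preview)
Your proposal is correct and follows exactly the approach the paper indicates: the paper's proof of Theorem \ref{BT1} simply states that the argument of Theorem \ref{BT} is repeated for each collinear pair, with details left to the reader, and you have filled in precisely those details. Your bookkeeping remark about simultaneously shrinking the cones $C^\pm_k$ is the only point requiring care, and as you note it is a finite intersection of open conditions, so no new idea is needed.
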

\begin{proof}
The argument of Theorem \ref{BT} is repeated for each collinear pair. Details are left to the reader.
\end{proof}
\section{Generalization to $\mathrm{dim}\, \mathrm{H}^0+ \mathrm{dim}\, \mathrm{H}^1+ \mathrm{dim}\, \mathrm{H}^2<m$}\label{gene}
This section contain a generalization of the main theorem.

Let $\mathrm{Pic} (\mathbb{P}_{\mathbf{\Sigma}})$ be the Picard group of $\mathbb{P}_{\mathbf{\Sigma}}$. For any positive integer $m$, we consider the set $$\Lambda_m=\{\mathcal{L}\in \mathrm{Pic}( \mathbb{P}_{\mathbf{\Sigma}}) |  \mathrm{dim}\, \mathrm{H}^0(\mathbb{P}_{\mathbf{\Sigma}},\mathcal{L})+ \mathrm{dim}\, \mathrm{H}^1(\mathbb{P}_{\mathbf{\Sigma}},\mathcal{L})+ \mathrm{dim}\, \mathrm{H}^2(\mathbb{P}_{\mathbf{\Sigma}},\mathcal{L})<m\}.$$ We know $\Lambda_1=\{\mathcal{L}\in \mathrm{Pic}( \mathbb{P}_{\mathbf{\Sigma}}) | \mathcal{L} \text{ is $\mathrm{H}-$trivial} \}$. Also by Theorem \ref{iff}, we know $\Lambda_1$ is finite iff there exists no $\{i, j\}\subset\{1,2,\ldots,n\}$ such that $v_i$ and $v_j$ are collinear. We have the following theorem.

\begin{theorem}\label{generalize}
In the assumption of Theorem \ref{iff},
if no $v_i$ and $v_j$ are collinear, then $\Lambda_m$ is finite for any positive integer $m$.
\end{theorem}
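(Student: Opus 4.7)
The plan is to upgrade Proposition \ref{inf->} to a quantitative statement by a lattice-point counting argument. By Proposition \ref{Coho} and Remark \ref{coho2}, if $\sum_i a_i E_i$ is any representation of $\mathcal{L}$, then
\[\sum_{j=0}^{2}\mathrm{dim}\,\mathrm{H}^j(\mathbb{P}_{\mathbf{\Sigma}},\mathcal{L}) \;=\; \sum_{f\in N^*} b_{I(\mathbf{r}_f)},\]
where $I(\mathbf{r}_f)=\{i:a_i+f(v_i)\geq 0\}$ and the total reduced Betti number $b_I$ of $C_I$ satisfies $b_I\geq 1$ precisely when $I\in\Delta$. To conclude $\mathcal{L}\notin\Lambda_m$ it therefore suffices to produce at least $m$ distinct $f\in N^*$ with $I(\mathbf{r}_f)\in\Delta$.

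First I would upgrade Lemma \ref{d} by a scaling observation: applying it to $\mathcal{L}/k\in\mathrm{Pic}_{\mathbb{R}}(\mathbb{P}_{\mathbf{\Sigma}})$ and using that $Z_I$ is a cone, the condition $|\mathcal{L}|>kd$ forces $\mathcal{L}\in kr_I+Z_I$ for some $I\in\Delta$. I would also replace each lattice representative $r_I$ of Proposition \ref{r_I} by $r_I+\sum_{i\in I}E_i-\sum_{i\notin I}E_i$ (still in $FS_I$, because $\sum_{i\in I}E_i-\sum_{i\notin I}E_i\in Z_I\cap\mathbb{Z}^n$), so as to ensure the strict inequalities $(r_I)_i\geq 1$ for $i\in I$ and $(r_I)_i\leq -2$ for $i\notin I$ in some representation.

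Next, for each $I\in\Delta$ I would analyze the polytope
\[P_I(\mathcal{L})=\{f\in N^*_{\mathbb{R}}:\,a_i+f(v_i)\geq 0 \text{ for } i\in I,\ a_i+f(v_i)\leq -1 \text{ for } i\notin I\},\]
whose lattice points in $N^*$ are precisely the $f$'s with $I(\mathbf{r}_f)=I$. Three claims drive the count: (i) $P_I(\mathcal{L})$ is bounded, since its recession cone $\{f:f(v_i)\geq 0,\,i\in I;\ f(v_i)\leq 0,\,i\notin I\}$ equals $\{0\}$ — for $I\in\{\emptyset,\{1,\ldots,n\}\}$ this is the dual of $\sum_i\mathbb{R}_{\geq 0}v_i=\mathbb{R}^2$ by completeness of the fan, and for $C_I$ disconnected the no-collinear hypothesis guarantees that at most one $v_i$ lies on the line $\{f=0\}$, so any nonzero $f$ with the prescribed signs would force $I=\{i:f(v_i)\geq 0\}$ to be a contiguous arc in the cyclic order of $\{v_1,\ldots,v_n\}$, contradicting disconnectedness; (ii) if $\mathcal{L}=kr_I+\mathcal{L}_0$ with $\mathcal{L}_0\in Z_I\cap\mathbb{Z}^n$, the contribution of $\mathcal{L}_0$ only weakens the defining inequalities, giving $P_I(\mathcal{L})\supseteq P_I(kr_I)$; (iii) the substitution $f=kg$ identifies $P_I(kr_I)$ with $k\cdot Q_I^{(k)}$, where $Q_I^{(k)}=\{g:g(v_i)\geq -(r_I)_i,\,i\in I;\ g(v_i)\leq -(r_I)_i-1/k,\,i\notin I\}$ contains $g=0$ in its interior (all inequalities strict at $g=0$, by our choice of $r_I$), hence contains a ball of fixed radius $\epsilon_I>0$ for every $k\geq 1$.

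Combining (ii) and (iii), $P_I(\mathcal{L})$ contains a ball in $N^*_{\mathbb{R}}$ of radius $k\epsilon_I$ about the origin, hence at least $c_I k^2$ lattice points of $N^*$ for some $c_I>0$ and all large $k$. Setting $c=\min_{I\in\Delta}c_I>0$ and choosing $k$ with $ck^2\geq m$, every $\mathcal{L}$ with $|\mathcal{L}|>kd$ satisfies $\sum_j\mathrm{dim}\,\mathrm{H}^j(\mathcal{L})\geq m$, so $\Lambda_m$ lies inside a ball of radius $kd$ in $\mathrm{Pic}_{\mathbb{R}}(\mathbb{P}_{\mathbf{\Sigma}})$; since $\mathrm{Pic}(\mathbb{P}_{\mathbf{\Sigma}})$ is finitely generated, $\Lambda_m$ is finite. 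The main obstacle will be claim (i): the argument mimics the one implicit in Lemma \ref{interior} and must correctly handle both the extremal subsets $I\in\{\emptyset,\{1,\ldots,n\}\}$ (using completeness of the fan) and the disconnected-$C_I$ cases (using no-collinearity). Once (i) is in place, (ii) and (iii) are direct manipulations of the defining inequalities and the quadratic lattice-point growth is automatic.
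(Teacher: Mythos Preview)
Your approach is correct and takes a somewhat different route from the paper. The paper invokes Lemma~\ref{ktimes}: since the generators $\{E_i\}_{i\in I}\cup\{-E_i\}_{i\notin I}$ of $Z_I$ are $n>\mathrm{rk}\,\mathrm{Pic}(\mathbb{P}_{\mathbf\Sigma})$ in number, there is a nontrivial integer relation among them, and iterating it produces a shift $\eta_I\in Z_I$ such that every lattice point of $(\eta_I+Z_I)\cap\mathrm{Pic}$ has at least $m$ nonnegative-integer expressions in these generators; each such expression lands in $FS_I$ and contributes to cohomology by Proposition~\ref{Coho}. Then Lemma~\ref{d} with $x_I=\eta_I$ confines $\Lambda_m$ to a ball. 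You instead scale the shift in Lemma~\ref{d} by $k$ and count lattice points directly in $N^*_\mathbb{R}\cong\mathbb{R}^2$: the polytope $P_I(\mathcal L)$ contains a ball of radius $\sim k$, hence $\sim k^2$ points of $N^*$. The two proofs are counting the same thing---expressions of $\mathcal L$ in $FS_I$ correspond bijectively to points of $N^*\cap P_I(\mathcal L)$---but the paper uses only a one-parameter family of relations while you use the full two-dimensional lattice, obtaining quadratic rather than linear growth (which is overkill but fine).

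Two small remarks. Your claim~(i) on boundedness of $P_I(\mathcal L)$ is true but plays no role in a \emph{lower} bound on $|N^*\cap P_I(\mathcal L)|$; only~(ii) and~(iii) are used, so it is not the main obstacle. And claim~(ii) is stated a bit loosely: from $\mathcal L\in kr_I+Z_I$ in $\mathrm{Pic}_\mathbb{R}$ you obtain only a \emph{real} lift of $\mathcal L_0=\mathcal L-kr_I$ with the prescribed coordinate signs, not an integer one in $\mathbb{Z}^n$. The remedy is harmless: one gets $P_I(\mathcal L)\supseteq -f_0+P_I(kr_I)$ for some $f_0\in N^*_\mathbb{R}$, and a translated ball of radius $k\epsilon_I$ contains just as many $N^*$-lattice points. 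With that adjustment your argument goes through.
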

\begin{proof}
By Lemma \ref{ktimes}, for each $I$, we have an element $\eta_I\in Z_I$ such that any element $\mathcal{L} \in (\eta_I+Z_I)\cap \mathrm{Pic}( \mathbb{P}_{\mathbf{\Sigma}})$ can be expressed as a linear combination
of generators of $Z_I$ in at least $m$ ways. And each expression contributes at least one to $\mathrm{dim}\, \mathrm{H}^0(\mathbb{P}_{\mathbf{\Sigma}},\mathcal{L})+ \mathrm{dim}\, \mathrm{H}^1(\mathbb{P}_{\mathbf{\Sigma}},\mathcal{L})+ \mathrm{dim}\, \mathrm{H}^2(\mathbb{P}_{\mathbf{\Sigma}},\mathcal{L})$ by Proposition \ref{Coho}. Thus $\mathcal{L} \notin \Lambda_m$. Also by Proposition \ref{inf->}, we have that any non-zero element in $\mathrm{Pic}_{\mathbb{R}}(\mathbb{P}_{\mathbf{\Sigma}})$ is contained in the interior of $Z_I$ for some $I$ since $v_i$ and $v_j$ are not collinear for any $\{i, j\}\subset\{1,2,\cdots,n\}$. Then by Lemma \ref{d},
 there is a positive number $d'$ such that if $|L|>d'$, we have $\mathcal{L}\in  \eta_I+Z_I$ for some $I$. Thus $\Lambda_m \subseteq \{L \in \mathrm{Pic} (\mathbb{P}_{\mathbf{\Sigma}})| |\mathcal{L}|\leq d' \}$, which implies $\Lambda_m$ is finite.
\end{proof}
\begin{remark}
The tubes $+$ ball description of $H-$trivial line bundles also applies to $\Lambda_m$. The argument is the same.
\end{remark}

\section{Comments}\label{comments}
In this section, we express our expectation for the case of dimension three.

We consider the smooth toric DM stack $\mathbb{P}_{\mathbf{\Sigma}}$ associated to a complete stacky fan $\mathbf{\Sigma}=(\Sigma, \{v_i\}_{i=1}^{n})$ in a lattice with rank $3$. Similarly to dimension two case, infinitely many $\mathrm{H}-$trivial line bundles can be obtained by having a fibration $\pi$ from $\mathbb{P}_{\mathbf{\Sigma}}$ to a certain base and a line bundle $\mathcal{L}$ on $\mathbb{P}_{\mathbf{\Sigma}}$ such that the higher direct image $\mathbf{R}^i \pi_*(\mathcal{L})=0$ for all $i\geq 0$. The following conjecture is meant to encode the existence of such fibrations with two- or one-dimensional base.

\begin{conjecture}\label{iffd3}
There are infinitely many $\mathrm{H}-$trivial line bundles on $\mathbb{P}_{\mathbf{\Sigma}}$ if and only if there exists $\{i, j\}\subset\{1,2,\cdots,n\}$ such that $v_i$ and $v_j$ are collinear or there exists a plane intersecting all three dimensional cones of $\mathbf{\Sigma}$ at their boundaries.
\end{conjecture}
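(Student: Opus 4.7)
The plan is to mirror the strategy of Section \ref{H}, proving each direction separately, with the combinatorial criterion reflecting two distinct geometric sources of infinitely many $\mathrm{H}$-trivial line bundles. For the ``if'' direction I would argue via fibrations. Suppose first that two rays $v_i$ and $v_j$ are collinear. Projecting $N \to N/(\mathbb{Z}v_i)$ induces a morphism $\pi\colon \mathbb{P}_{\mathbf{\Sigma}} \to \mathbb{P}_{\mathbf{\Sigma}'}$ with weighted projective-line fibers; the line bundle $\mathcal{O}(-E_i)$ has vanishing higher direct images, so by Leray every twist $\mathcal{O}(-E_i)\otimes \pi^{*}\mathcal{F}$ is $\mathrm{H}$-trivial, yielding infinitely many. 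In the second case, suppose $H$ is a plane meeting every maximal cone in its boundary. Projecting $N \to N/(N\cap H)$ then gives a morphism $\pi\colon \mathbb{P}_{\mathbf{\Sigma}} \to X_{1}$ to a one-dimensional stack whose generic fibers are toric surfaces assembled from the rays of $\Sigma$ lying on $H$. Since any smooth proper toric surface admits at least one $\mathrm{H}$-trivial line bundle, I choose a relative line bundle $\mathcal{L}_{0}$ that is fiberwise $\mathrm{H}$-trivial and twist by arbitrary pullbacks from $X_{1}$ to obtain infinitely many $\mathrm{H}$-trivial line bundles upstairs.

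For the ``only if'' direction, the strategy is to generalize the covering argument of Section \ref{H} to dimension three. Define the forbidden cones $Z_{I}$ and forbidden sets $FS_{I}$ exactly as in Definition \ref{ZI}, extending $\Delta$ in the obvious way to rank-three fans. Assuming neither combinatorial condition of the conjecture holds, I would attempt to prove the dimension-three analogue of Lemma \ref{interior}: every nonzero $\mathcal{L}\in\mathrm{Pic}_{\mathbb{R}}(\mathbb{P}_{\mathbf{\Sigma}})$ lies in the interior of $Z_{I}$ for some $I\in\Delta$. Once this is in hand, Lemma \ref{d} and Proposition \ref{inf->} transfer verbatim and force all $\mathrm{H}$-trivial line bundles into a bounded ball, giving finitely many. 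To prove this interior lemma, I would pick a maximal cone $\sigma = \langle v_{a},v_{b},v_{c}\rangle$, use a dimension-three version of Lemma \ref{g} to write $\mathcal{L}=\sum_{i} g(v_{i})E_{i}$ with $g$ stacky piecewise linear and $g(v_{a})=g(v_{b})=g(v_{c})=0$, then seek a small linear perturbation $f'$ so that $I^{+} = \{i : (g+f')(v_{i}) > 0\}$ lies in $\Delta$, that is, so that the simplicial complex $C_{I^{+}}$ has nontrivial reduced homology.

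The main obstacle is classifying the obstructions to such a perturbation. In dimension two the only obstruction was that the zero-set of $g$ subtended a straight line through the origin, i.e., the existence of a collinear pair. In dimension three I anticipate two kinds of obstructions: either the zero-set of $g$ contains two collinear rays (recovering the first disjunct), or the zero-locus of $g$ is a full plane $H$ separating the positive and negative rays in such a way that $C_{I^{+}}$ and $C_{I^{-}}$ both deformation-retract onto disks and therefore carry no reduced homology; geometrically, this happens precisely when $H$ meets every three-dimensional cone of $\Sigma$ at its boundary, giving the second disjunct. Verifying that these are the only two obstructions requires a careful case analysis of the link of the zero-locus in the boundary sphere of $\Sigma$ and of the reduced homology of the resulting complexes $C_{I}$, analogous to but noticeably more intricate than the four-case breakdown in Lemma \ref{interior}. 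A secondary technical point is the dimension-three analogue of the shift statement used before Proposition \ref{inf->}, namely the existence of integral $r_{I} \in FS_{I}$ with $r_{I}+(Z_{I}\cap\mathbb{Z}^{n})\subseteq FS_{I}$ for each $I\in\Delta$; I expect this to follow from the same appendix facts about cones in finitely generated abelian groups, since the hypothesis $N=\sum_{i}\mathbb{Z}v_{i}$ continues to hold.
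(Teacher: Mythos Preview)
The statement you are attempting to prove is \emph{Conjecture}~\ref{iffd3}, and the paper does not prove it: Section~\ref{comments} explicitly presents it as an open problem and only expresses the hope that ``the methods and approach of this paper will still be useful.'' There is therefore no proof in the paper to compare your proposal against. What you have written is a plausible strategy outline in exactly the spirit the paper suggests, but it is not a proof, and you should not expect it to be graded as one.

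That said, a few substantive remarks on the outline itself. For the ``if'' direction you appeal to fibrations, but note that the paper does \emph{not} prove the two-dimensional ``if'' direction this way: Proposition~\ref{->inf} is a direct combinatorial sign-change argument, and the fibration picture appears only as motivation in the introduction. Your fibration argument has genuine gaps. In the collinear case, the quotient $N\to N/\mathbb{Z}v_i$ need not carry $\Sigma$ to a fan: the images of distinct maximal cones can overlap unless the combinatorics of $\Sigma$ are special, so the map $\pi$ you invoke may not exist as a morphism of toric stacks. Likewise, in the plane case, the hypothesis that $H$ meets every maximal cone in its boundary says that $H$ is a union of faces of $\Sigma$, but it does not by itself guarantee that projection to $N/(N\cap H)$ sends $\Sigma$ to a fan in rank one; you would need every maximal cone to lie entirely on one side of $H$ and to project onto a ray, which is an extra verification. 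Even granting the fibration, the claim that a fiberwise $\mathrm{H}$-trivial bundle $\mathcal{L}_0$ can be chosen so that $\mathbf{R}^i\pi_*\mathcal{L}_0=0$ for all $i$ requires an argument (fiberwise vanishing is necessary but not obviously sufficient without flatness and base-change control).

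For the ``only if'' direction your plan is reasonable and tracks the two-dimensional argument closely, and you are honest that the crux---the three-dimensional analogue of Lemma~\ref{interior}---is where the real work lies. Your identification of the two expected obstructions (a collinear pair inside the zero set, or the zero set being a plane slicing every maximal cone along its boundary) is exactly what the conjecture predicts, but the assertion that these are the \emph{only} obstructions is the entire content of the conjecture in this direction, and your sketch does not supply the case analysis needed to establish it. In short: the proposal is a sensible roadmap, consistent with the paper's own hopes, but it leaves the decisive lemma unproved in both directions.
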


It is hoped that the methods and approach of this paper will still be useful for Conjecture \ref{iffd3}.
One of the reasons to study $\mathrm{H}-$trivial line bundles on dim $3$ smooth DM Fano stacks is to try to find an example of one without full exceptional collection of line bundles. Conjecture \ref{iffd3} indicates that such example is likely to have origin far from all diagonals.

\section{Appendix: several facts about cones and semigroups}\label{appendix}

In this section, we state and prove several facts about the relationship between lattice points in a cone and semigroup generated by several lattice points in a finitely generated group $M$. We assume $M=\mathbb{Z}^k\oplus M_{tor}$ for some positive integer $k$, where $M_{tor}$ is torsion part of $M$.

Let $w_1,\ldots,w_n$ be $n$ elements of $M$ such that $M=\sum_{i=1}^{n}\mathbb{Z}w_i$, that is to say $w_1,\ldots,w_n$ generate $M$ over $\mathbb{Z}$. We assume $C=\sum_{i=1}^{n}\mathbb{R}_{\geq0}w_i$ is a cone satisfying $C\cap (-C)=\{0\}$. Then we pick a supporting hyperplane such that all $w_i$ are on the same side of the plane, that is to say the linear function corresponding to the hyperplane which we denote by $h$ takes positive value on all $w_i$.

\begin{lemma}\label{<}
Let $x$ be a point in $C$. If $h(x)\geq \sum_{i=1}^{n}h(w_i)$, there is a point $y\in C$ such that $x=y+w_j$ for some $j\in \{1,\ldots,n\}$.
\end{lemma}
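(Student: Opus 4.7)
The plan is to write $x$ explicitly in terms of the generators $w_i$, detect a coefficient that is at least $1$ using the positivity of $h$ on each $w_i$, and then peel off the corresponding generator.

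First, I would use $x\in C=\sum_{i=1}^{n}\mathbb{R}_{\geq 0}w_i$ to fix some representation $x=\sum_{i=1}^{n}a_iw_i$ with every $a_i\geq 0$. Applying the linear functional $h$ converts the hypothesis $h(x)\geq \sum_{i=1}^{n}h(w_i)$ into the purely numerical inequality $\sum_{i=1}^{n}a_ih(w_i)\geq \sum_{i=1}^{n}h(w_i)$, where all the $h(w_i)$ are strictly positive by the choice of supporting hyperplane.

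Next, I would argue by contradiction that at least one of the coefficients must satisfy $a_j\geq 1$. Indeed, if every $a_i$ were strictly less than $1$, then positivity of $h(w_i)$ would give $a_ih(w_i)<h(w_i)$ term by term, and summing would contradict the inequality just derived. Fixing such an index $j$, I would then set $y:=x-w_j=(a_j-1)w_j+\sum_{i\neq j}a_iw_i$, which is a nonnegative real combination of the $w_i$'s and hence lies in $C$, giving the desired decomposition $x=y+w_j$.

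There is essentially no obstacle beyond this short averaging observation, so I do not anticipate a hard step. The only minor point to keep in mind is that the representation $x=\sum_i a_iw_i$ need not be unique, but any single valid choice is enough for the argument. Note also that if $x\in M$ then automatically $y=x-w_j\in M$, so the same statement holds at the level of lattice points without any extra work, which is how this lemma will be applied in the sequel.
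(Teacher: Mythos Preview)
Your proposal is correct and follows essentially the same argument as the paper: write $x=\sum_i a_i w_i$ with $a_i\geq 0$, use positivity of $h(w_i)$ to force some $a_j\geq 1$, and set $y=x-w_j$. Your version is in fact slightly cleaner, since you correctly conclude $a_j\geq 1$ rather than $a_j>1$, which is exactly what is needed for $y\in C$.
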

\begin{proof}
We assume $x=\sum_{i=1}^{n}a_iw_i$ $(\mathrm{mod} M_{tor})$, where $a_i\in \mathbb{R}_{\geq 0}$. Then we have $h(x)=\sum_{i=1}^{n}a_ih(w_i)$. If $0\leq a_i<1$ for $i=1,\ldots,n$, we get $h(x)<\sum_{i=1}^{n}h(w_i)$ which contradicts our assumption. Thus there is $a_j>1$ for some $j\in \{1,\ldots,n\}$. Let $y=x-w_j$. So $y$ is a linear combination of $w_i$ with nonnegative coefficients and $x=y+w_j$.
\end{proof}
\begin{corollary}
Let $FS=\sum_{i=1}^{n}\mathbb{Z}_{\geq0}w_i$ be the semigroup generated by $w_i$. For any point $x\in C$, we can write $x=a+b$, where $a\in FS$ and $b\in C$ with $h(b)<\sum_{i=1}^{n}h(w_i)$.
\end{corollary}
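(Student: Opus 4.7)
The plan is to iterate Lemma \ref{<} until the residual $h$-value drops below $\sum_{j=1}^{n} h(w_j)$. Set $x_0 := x$. At step $i$, if $h(x_i) \geq \sum_{j=1}^{n} h(w_j)$, apply Lemma \ref{<} to produce an index $j_{i+1} \in \{1,\ldots,n\}$ and a point $x_{i+1} \in C$ with $x_i = x_{i+1} + w_{j_{i+1}}$; otherwise stop. Once the process halts at step $N$, set
\[
a := \sum_{i=1}^{N} w_{j_i} \in FS, \qquad b := x_N \in C.
\]
Telescoping the recurrence gives $x = a + b$, and the stopping condition yields $h(b) < \sum_{j=1}^{n} h(w_j)$. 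The convention that the empty sum is $0 \in FS$ covers the case $N = 0$ when $h(x)$ is already small.

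The only thing to check is that the iteration actually terminates. Because $h$ takes positive values on all the generators, the constant $c := \min_{1 \leq i \leq n} h(w_i)$ is strictly positive. At each step, $h(x_{i+1}) = h(x_i) - h(w_{j_{i+1}}) \leq h(x_i) - c$, so the sequence $h(x_0), h(x_1), \ldots$ strictly decreases by at least $c$. Since $h$ is nonnegative on $C$ (it is positive on each $w_i$, hence nonnegative on their nonnegative real combinations), this sequence is bounded below by $0$, forcing termination in at most $\lceil h(x)/c \rceil$ steps. There is no real obstacle here; all the substantive content is already in Lemma \ref{<}, and the corollary is a straightforward finite-descent consequence.
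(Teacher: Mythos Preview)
Your proof is correct and follows essentially the same approach as the paper: both iterate Lemma \ref{<} to peel off generators until the $h$-value of the remainder drops below $\sum_{i=1}^n h(w_i)$. Your termination argument via the uniform lower bound $c=\min_i h(w_i)>0$ is arguably cleaner than the paper's, which instead asserts that after finitely many steps all coefficients in the cone representation are less than one.
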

\begin{proof}
If $h(x)<\sum_{i=1}^{n}h(w_i)$, we have $x=a+b$, where $a=0$ and $b=x$. If $h(x)\geq \sum_{i=1}^{n}h(w_i)$, by Lemma \ref{<}, there exits some $j_1\in \{1,\ldots,n\}$ such that $x-w_j\in C$. If $h(x-w_{j_1})<\sum_{i=1}^{n}h(w_i)$, we have $x=a+b$, where $a=w_j$ and $b=x-w_j$. If $h(x-w_{j_1})\geq\sum_{i=1}^{n}h(w_i)$, we use Lemma \ref{<} again to get some $j_2\in\{1,\ldots,n\}$ such that $x-w_{j_1}-w_{j_2}\in C$. We repeat the process which will stop within finite steps. That is to say, there is an integer $m$ such that all the coefficient of $x-\sum_{l=1}^{m}w_{j_l} \in C$ are less than one. Thus $h(x-\sum_{l=1}^{m}w_{j_l})<\sum_{i=1}^{n}h(w_i)$. So $x=a+b$, where $a=\sum_{l=1}^{m}w_{j_l}\in FS$ and $b=x-\sum_{l=1}^{m}w_{j_l}$.
\end{proof}
\begin{remark}
We consider a set $\{p\in C \cap M| h(p)< \sum_{i=1}^{n}h(w_i)\}$ which we denote by $\Gamma$. Since the lattice points in a bounded region are finite, we know $\Gamma$ is a finite set. Then we have the following corollary.
\end{remark}
\begin{corollary}\label{cup}
We assume $\Gamma=\{p_1,\ldots,p_t\}$. Then we have
$$C \cap M=\bigcup_{i=1}^{t}(p_i+FS).$$
\end{corollary}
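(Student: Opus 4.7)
The plan is to derive Corollary~\ref{cup} as an immediate lattice refinement of the preceding Corollary. Writing $\mathrm{RHS} = \bigcup_{i=1}^{t}(p_i+FS)$, I would handle the two inclusions separately.

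The inclusion $\mathrm{RHS} \subseteq C \cap M$ is the easy direction. Each $p_i$ lies in $C \cap M$ by the very definition of $\Gamma$. Every element of $FS = \sum_{i=1}^{n}\mathbb{Z}_{\geq 0}w_i$ lies in $C \cap M$ because the generators $w_i$ do and both $C$ and $M$ are closed under addition. Hence every sum $p_i + s$ with $s \in FS$ belongs to $C \cap M$.

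The inclusion $C \cap M \subseteq \mathrm{RHS}$ is where the preceding Corollary enters. Given $x \in C \cap M$, that result yields a decomposition $x = a + b$ with $a \in FS$ and $b \in C$ satisfying $h(b) < \sum_{i=1}^{n} h(w_i)$. The key observation is that this decomposition is automatically a lattice decomposition: since $a \in FS \subseteq M$, we get $b = x - a \in M$ as well. Hence $b \in C \cap M$ with $h(b) < \sum_{i=1}^{n} h(w_i)$, which places $b$ in the finite set $\Gamma$, say $b = p_i$. Then $x = p_i + a \in p_i + FS$, as desired.

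I do not anticipate any real obstacle: the only substantive point is the lattice-ness of $b$ in the second inclusion, and this is immediate from $FS \subset M$. The statement is essentially a lattice-point repackaging of the decomposition already established.
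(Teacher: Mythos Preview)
Your proposal is correct and matches the paper's intended argument: the paper does not write out a separate proof for this corollary, treating it as immediate from the preceding decomposition corollary and the remark defining $\Gamma$, and your two inclusions fill in exactly those details. The only substantive observation---that $b = x - a$ automatically lies in $M$ because $a \in FS \subseteq M$---is precisely what makes the passage from the real decomposition to the lattice statement work, and you have identified it correctly.
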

\begin{proposition}\label{r_I}
Let $FS=\sum_{i=1}^{n}\mathbb{Z}_{\geq0}w_i$. Then there exists a point $r\in FS$ such that $r+(C \cap M) \subseteq FS$.
\end{proposition}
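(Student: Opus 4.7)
The plan is to reduce the statement to a finite problem via Corollary \ref{cup} and then handle that finite problem using the hypothesis that the $w_j$ generate $M$ as an abelian group. First I would invoke Corollary \ref{cup} to write
\[
C \cap M = \bigcup_{i=1}^{t} (p_i + FS),
\]
where $\Gamma = \{p_1,\ldots,p_t\}$ is the (finite) set of lattice points of $C$ on which $h$ takes value less than $\sum_{i=1}^n h(w_i)$. Since $FS$ is closed under addition, we have $r + p_i + FS \subseteq FS$ whenever $r + p_i \in FS$. Therefore it suffices to produce a single $r \in FS$ such that $r + p_i \in FS$ for each $i = 1, \ldots, t$; unioning over $i$ then yields $r + (C \cap M) \subseteq FS$.

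The remaining task is to produce such an $r$, and here I would use the assumption $M = \sum_{i=1}^n \mathbb{Z} w_i$. For every $i \in \{1,\ldots,t\}$, choose an integer expression $p_i = \sum_{j=1}^n a_{ij} w_j$ with $a_{ij} \in \mathbb{Z}$; the $a_{ij}$ may be negative and need not be unique, but any one such expression will do. Set
\[
b_j := \max\{0, -a_{1j}, -a_{2j}, \ldots, -a_{tj}\} \in \mathbb{Z}_{\geq 0}, \qquad r := \sum_{j=1}^n b_j w_j.
\]
By construction $r \in FS$, and for every $i$ the sum $r + p_i = \sum_j (b_j + a_{ij}) w_j$ has nonnegative integer coefficients, so $r + p_i \in FS$ as required.

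There is no serious obstacle: the combinatorial content of the proposition has already been isolated in Corollary \ref{cup}, and the remaining step is a direct argument using the integral generation $M = \sum_j \mathbb{Z} w_j$. The only point worth a brief sanity check is that the presence of a torsion summand $M_{tor}$ in $M$ does not cause trouble. It does not, because the equality $p_i = \sum_j a_{ij} w_j$ is taken in $M$ itself (torsion included), and the nonnegativity of the coefficients $b_j + a_{ij}$ is all that is needed to certify membership in the semigroup $FS \subseteq M$.
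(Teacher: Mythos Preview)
Your proposal is correct and follows essentially the same approach as the paper's proof: both invoke Corollary~\ref{cup} to reduce to the finite set $\{p_1,\ldots,p_t\}$, write each $p_i$ as an integer combination of the $w_j$, and then choose $r$ with coefficients large enough to make each $r+p_i$ a nonnegative combination. The only cosmetic differences are that the paper takes $a_j=\max_i |a_{ij}|$ while you take the slightly sharper $b_j=\max\{0,-a_{1j},\ldots,-a_{tj}\}$, and you address the torsion issue more explicitly; neither affects the argument.
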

\begin{proof}
Since $M=\sum_{i=1}^{n}\mathbb{Z}w_i$, we assume $p_i=\sum_{j}a_{ij}w_j$ $(\mathrm{mod} M_{tor})$, where $a_{ij}\in  \mathbb{Z}$. Let $a_j=\max\limits_{1\leq i\leq t}{|a_{ij}|}$ for $j=1,\ldots,n$. Denote $r=\sum_{j=1}^{n}a_jw_j$. We claim $r+(C \cap M) \subseteq FS$. For any point $p\in C \cap M $, $p\in p_i+FS$ for some $i\in\{1,\ldots,t\}$ by Corollary \ref{cup}. We assume $p=p_i+s$, where $s\in FS$. We have all the coefficients of $r+p_i$ are in $\mathbb{Z}_{\geq 0}$ by the definition of $a_j$. Thus $r+p_i\in FS$. So $r+p=r+p_i+s\in FS$.
\end{proof}
We also have a lemma.
\begin{lemma}\label{ktimes}
If $n>k$, there exists a point $p\in FS$ such that every element in $r+p+(C\cap M)$ can be written as a nonnegative integer linear combination of $w_1,\ldots,w_n$ in at least $m$ different ways.
\end{lemma}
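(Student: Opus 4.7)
The plan is to leverage the inequality $n > k$, which forces the surjection $\phi: \mathbb{Z}^n \to M$ sending $e_i \mapsto w_i$ to have kernel of rank $n - k \geq 1$, and then to use a large multiple of a nonzero kernel element to manufacture $m$ distinct nonnegative-integer expressions for any given element of $r + p + (C \cap M)$.

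Concretely, I would first pick a nonzero integer vector $(c_1, \ldots, c_n) \in \ker \phi$, so that $\sum_{i=1}^n c_i w_i = 0$ in $M$. Setting $I^+ := \{i : c_i > 0\}$ and $I^- := \{i : c_i < 0\}$, both sets must be nonempty: if, say, $I^- = \emptyset$, then applying the supporting linear functional $h$ (which is strictly positive on every $w_i$) to the relation would give $0 = \sum_{i \in I^+} c_i h(w_i) > 0$, a contradiction. This existence-of-a-relation-with-both-signs step is really the only place where something beyond bookkeeping is needed, and it is the main (though mild) obstacle; it is precisely where the pointedness of $C$ comes in. Define $u := \sum_{i \in I^+} c_i w_i = \sum_{i \in I^-} (-c_i) w_i \in FS$; this element has two genuinely distinct nonnegative-integer representations, supported on the disjoint sets $I^+$ and $I^-$.

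Next, I would take $p := (m-1) u \in FS$. For any $L \in r + p + (C \cap M)$, the difference $L - p$ lies in $r + (C \cap M) \subseteq FS$ by Proposition \ref{r_I}, so it admits a fixed representation $L - p = \sum_{i=1}^n b_i w_i$ with $b_i \in \mathbb{Z}_{\geq 0}$. For each $j \in \{0, 1, \ldots, m-1\}$, write $p = (m-1-j)\sum_{i \in I^+} c_i w_i + j \sum_{i \in I^-} (-c_i) w_i$; adding this to the chosen representation of $L - p$ produces a nonnegative-integer representation of $L$. Fixing any $i_0 \in I^+$, the coefficient of $w_{i_0}$ in the $j$-th representation equals $b_{i_0} + (m-1-j) c_{i_0}$, which strictly decreases in $j$; hence the $m$ representations of $L$ are pairwise distinct, which is exactly what the lemma demands.
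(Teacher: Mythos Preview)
Your proof is correct and follows essentially the same approach as the paper: pick a nonzero kernel element of $\mathbb{Z}^n \to M$, use the supporting functional $h$ to force both a positive and a negative part, and then split a large multiple of the resulting element $u$ in $m$ different ways to translate a fixed $FS$-expression of $L-p$ into $m$ distinct ones for $L$. The only cosmetic differences are that the paper takes $p=(m+1)u$ rather than your slightly tighter $p=(m-1)u$, and that you make the distinctness of the $m$ representations explicit by tracking the coefficient of a fixed $w_{i_0}$ with $i_0\in I^+$.
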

\begin{proof}
Since $n>k$ and $M=\sum_{i=1}^{n}\mathbb{Z}w_i$, there exists a relation $\sum_{i=1}^{n}a_iw_i=0$, where $a_i\in \mathbb{Z}$ and $\{i|a_i\neq0\}\neq\emptyset$. Let $I_+=\{i|a_i>0\}$ and $I_-=\{i|a_i<0\}$. We claim $I_+\neq \emptyset$. Otherwise, we have $\sum_{i=1}^{n}a_ih(w_i)<0$ since the assumption that $h$ takes positive value on all $w_i$. This contradicts to $\sum_{i=1}^{n}a_ih(w_i)=h(\sum_{i=1}^{n}a_iw_i)=h(0)=0$. Similarly, we have $I_-\neq \emptyset$. Then let $p_1=\sum_{i\in J_+}a_iw_i=\sum_{i\in J_-}(-a_i)w_i$. Let $p=(m+1)p_1$. We have
\begin{equation*}
\begin{split}
p&=mp_1+p_1=\sum_{i\in J_+}ma_iw_i+\sum_{i\in J_-}(-a_i)w_i\\
&=(m-1)p_1+2p_1=\sum_{i\in J_+}(m-1)a_iw_i+\sum_{i\in J_-}2(-a_i)w_i\\
&=\cdots\\
&=p_1+mp_1=\sum_{i\in J_+}a_iw_i+\sum_{i\in J_-}m(-a_i)w_i
\end{split}
\end{equation*}
Thus $p$ can be written as a nonnegative integer linear combination of $w_1,\ldots,w_n$ in at least $m$ different ways.
Then for arbitrary $x\in r+(C\cap M)$, we assume $x=\sum_{i=1}^{n}b_iw_i$ for $b_i\in \mathbb{Z}_\geq 0$ by Lemma \ref{r_I}. We have $x+p=\sum_{i=1}^{n}b_iw_i+\sum_{i\in J_+}(m+1-j)a_iw_i+\sum_{i\in J_-}j(-a_i)w_i$ for $j=1,\ldots,m$. So $x+p$ can be written as a nonnegative integer linear combination of $w_1,\ldots,w_n$ in at least $m$ different ways.
\end{proof}

\end{document}